\newtheorem{theorem}{Theorem}[section]
\newtheorem{lemma}[theorem]{Lemma}
\newtheorem{proposition}[theorem]{Proposition}
\newtheorem{corollary}[theorem]{Corollary}
\theoremstyle{definition}
\newtheorem{example}[theorem]{Example}
\newtheorem{definition}[theorem]{Definition}
\begin{document}

\afterpage{\rhead[]{\thepage} \chead[\small   I. I. Deriyenko and
W. A. Dudek]{\small $D$-loops} \lhead[\thepage]{} }                  

\begin{center}
\vspace*{2pt} {\Large \textbf{$D$-loops}}\\[30pt]
{\large \textsf{\emph{Ivan I. Deriyenko \ and \ Wieslaw A. Dudek
}}}
\\[30pt]
\end{center}
{\bf Abstract.} \noindent{\footnotesize $D$-loops are loops with the antiautomorphic inverse property. The class of such loops is larger than the class of IP-loops. The smallest $D$-loops which is not an IP-loop has six elements. We prove several basic properties of such loops and present methods of constructions of $D$-loops from IP-loops. Unfortunately, a loop isotopic to a $D$-loop may not be a $D$-loop.}
\date{} \footnote{\textsf{2010 Mathematics Subject
Classification:} 20N05 }

\footnote{\textsf{Keywords:} Quasigroup, loop, $D$-loop, antiautomorphic inverse property, track.}

\footnote{The main results of this paper were presented at the conference Loops'11 which was held 

\hspace*{3mm}in Trest, Czech Republic, July, 2011.}

\section*{\centerline{1. Introduction}}\setcounter{section}{1}\setcounter{theorem}{0}

A loop is a quasigroup $Q(\circ)$ with an identity element always denoted by $1$. A loop $Q(\circ)$ has the {\it inverse property}, i.e., it is an {\it IP-loop}, if for each its element $a$ there exists in $Q$ a uniquely determined {\it inverse element} $a'$ such that $a'\circ(a\circ b)=(b\circ a)\circ a'$. This means that in an {\it IP}-loop for {\it right} and {\it left translations}, i.e., for $R_a(x)=x\circ a$, $L_a(x)=a\circ
x$, we have
\begin{equation}\label{e1}
R_a^{-1}=R_{a'}, \ \ \ \ L_a^{-1}=L_{a'}.
\end{equation}
It is not difficult to shown that in an IP-loop $Q(\circ)$ for all $a,b\in Q$ hold
\begin{equation}\label{e2}
a\circ a'=a'\circ a=1, \ \  (a')'=a
\end{equation}
and
\begin{equation}\label{e3}
(a\circ b)'=b'\circ a'\,.
\end{equation}

On the other hand, in any loop $Q(\circ)$ for each $a\in Q$ there are uniquely determined {\it left} and {\it right loop-inverse}
elements $a_{_L}^{-1},a_{_R}^{-1}\in Q$ for which we have
$a_{_L}^{-1}\circ a=a\circ a_{_R}^{-1}\!=1$. A two-sided
loop-inverse element to $a\in Q$ is denoted by $a^{-1}.$ Clearly,
$(a^{-1})^{-1}=a$. Hence, an element $a^{-1}\in Q$ is loop-inverse
to $a\in Q$ if and only if $a\in Q$ is loop-inverse to $a^{-1}.$
In a loop each inverse element is loop-inverse but a loop-inverse
element may not be inverse.

\begin{example}\label{Ex-1} Consider the following loop $Q(\circ)$:
{\small{
$$
\begin{array}{c|ccccccccccc} \circ&1&2&3&4&5&6&7\\ \hline
 1&1&2&3&4&5&6&7\rule{0pt}{9pt}\\
 2&2&3&1&6&7&5&4\\
 3&3&1&2&7&6&4&5\\
 4&4&7&6&5&1&3&2\\
 5&5&6&7&1&4&2&3\\
 6&6&4&5&2&3&7&1\\
 7&7&5&4&3&2&1&6
\end{array}
$$}}
\indent
In this loop we have $a_{_L}^{-1}=a_{_R}^{-1}$ for each $a\in Q$.
Hence, each element of this loop is loop-inverse. But $a=5$ is not
an inverse element since $4\circ (5\circ 6)\ne 6$. The map
$h(x)=x_{_R}^{-1}$ is an antiautomorphism of this loop,
i.e., it satisfies the identity $h(x\circ y)=h(y)\circ h(x)$. \hfill$\Box{}$
\end{example}

Recall that a loop $Q(\circ)$ satisfies the {\it antiautomorphic inverse
property} if for each $x\in Q$ there exists a two-sided
loop-inverse element $x^{-1}$ such that $(x\circ
y)^{-1}=y^{-1}\circ x^{-1}$ holds for all $x,y\in Q$. A simple
example of such loop is an IP-loop. The above example proves
that there are also loops with this property which are not
IP-loops. Thus, the class of loops with this property is much
larger than the class of IP-loops.

This enables us to introduce the following definition.

\begin{definition}
A loop $Q(\circ)$ is called a {\it $D$-loop} if it satisfies the
{\it dual automorphic property} for $\varphi(x)=x_{_R}^{-1}$,
i.e., if
\begin{equation}\label{e4}
(x\circ y)_{_R}^{-1}=y_{_R}^{-1}\circ x_{_R}^{-1}
\end{equation}
holds for all $x,y\in Q$.
\end{definition}

\begin{theorem}\label{T1.3} A loop $Q(\circ)$ is a $D$-loop if and
only if it satisfies the identity
\begin{equation}\label{e5}
(x\circ y)_{_L}^{-1}=y_{_L}^{-1}\circ x_{_L}^{-1}.
\end{equation}
\end{theorem}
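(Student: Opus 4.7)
The plan is to exploit the basic duality between the two one-sided loop-inverses: the maps $x \mapsto x_{_R}^{-1}$ and $x \mapsto x_{_L}^{-1}$ are bijections of $Q$ that are mutual inverses. Explicitly, $a\circ a_{_R}^{-1}=1$ says precisely that $a$ is the left loop-inverse of $a_{_R}^{-1}$, so $(a_{_R}^{-1})_{_L}^{-1}=a$; symmetrically $(a_{_L}^{-1})_{_R}^{-1}=a$. I would establish this observation first, because it is the engine of the whole argument and lets me translate identities involving $(-)_{_R}^{-1}$ into identities involving $(-)_{_L}^{-1}$ and vice versa.

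Assume first that \eqref{e4} holds and let $a,b\in Q$ be arbitrary. I would apply \eqref{e4} to the pair $x=b_{_L}^{-1}$, $y=a_{_L}^{-1}$, obtaining
\[
(b_{_L}^{-1}\circ a_{_L}^{-1})_{_R}^{-1}=(a_{_L}^{-1})_{_R}^{-1}\circ (b_{_L}^{-1})_{_R}^{-1}=a\circ b,
\]
where the second equality uses the duality above. Reading this equation through the bijection $(-)_{_L}^{-1}$ (i.e.\ taking the left loop-inverse of both sides and using $(u_{_R}^{-1})_{_L}^{-1}=u$) yields $b_{_L}^{-1}\circ a_{_L}^{-1}=(a\circ b)_{_L}^{-1}$, which is \eqref{e5}.

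The converse direction is completely symmetric: assuming \eqref{e5} and applying it to the pair $x=b_{_R}^{-1}$, $y=a_{_R}^{-1}$ gives $(b_{_R}^{-1}\circ a_{_R}^{-1})_{_L}^{-1}=a\circ b$, and passing back through the dual bijection recovers \eqref{e4}. So the proof reduces to one clean substitution in each direction, once the duality $(a_{_R}^{-1})_{_L}^{-1}=a=(a_{_L}^{-1})_{_R}^{-1}$ is in hand.

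I do not expect any genuine obstacle here; the only point that deserves care is being explicit about the duality between the two one-sided inverses, since the statement at first glance looks like two independent algebraic identities. The risk is wording the substitution step loosely in a way that seems to assume what one is trying to prove. I would therefore keep the two directions written out separately and mark clearly every place where the identity $(a_{_R}^{-1})_{_L}^{-1}=a$ (or its left--right mirror) is being used.
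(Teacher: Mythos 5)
Your proof is correct, but it follows a genuinely different route from the paper's. Under hypothesis \eqref{e4} the paper computes $1=(x_{_L}^{-1}\circ x)_{_R}^{-1}=x_{_R}^{-1}\circ(x_{_L}^{-1})_{_R}^{-1}$ and compares this with $1=x_{_L}^{-1}\circ(x_{_L}^{-1})_{_R}^{-1}$; uniqueness of the left loop-inverse of $(x_{_L}^{-1})_{_R}^{-1}$ then forces $x_{_L}^{-1}=x_{_R}^{-1}$, after which \eqref{e5} is literally the same identity as \eqref{e4}. You instead keep the two one-sided inverses distinct throughout and use only the general loop fact that $x\mapsto x_{_R}^{-1}$ and $x\mapsto x_{_L}^{-1}$ are mutually inverse bijections: substituting $x=b_{_L}^{-1}$, $y=a_{_L}^{-1}$ into \eqref{e4} and then applying $(-)_{_L}^{-1}$ to both sides yields \eqref{e5} directly, and the converse is the exact mirror image. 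Both arguments are short and sound. What the paper's version buys is the intermediate fact $a_{_L}^{-1}=a_{_R}^{-1}$, which is precisely Corollary \ref{C1.4} and is relied on throughout the rest of the paper (it is what licenses writing $x^{-1}$ with no subscript); with your argument that corollary would still need a separate, though easy, derivation, e.g.\ by putting $y=x_{_R}^{-1}$ in \eqref{e4}. What your version buys is a cleanly dual, substitution-only proof that never invokes uniqueness of solutions beyond the one standing fact $(a_{_R}^{-1})_{_L}^{-1}=a=(a_{_L}^{-1})_{_R}^{-1}$, which you rightly isolate and justify at the outset.
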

\begin{proof}
Suppose that $Q(\circ)$ is a $D$-loop. Since $x_{_L}^{-1}\circ
x=1$, from \eqref{e4} it follows
$$
1=1_{_R}^{-1}=(x_{_L}^{-1}\circ x)_{_R}^{-1}=x_{_R}^{-1}\circ
(x_{_L}^{-1})_{_R}^{-1},
$$
which together with $1=x_{_L}^{-1}\circ(x_{_L}^{-1})_{_R}^{-1}$
gives $x_{_L}^{-1}=x_{_R}^{-1}$. Thus \eqref{e4} implies
\eqref{e5}.

Analogously, using $x\circ x_{_R}^{-1}=1$ and
$(x_{_R}^{-1})_{_L}^{-1}\circ x_{_R}=1$ we can prove that
\eqref{e5} implies \eqref{e4}.
\end{proof}

\begin{corollary}\label{C1.4}
For all elements of $D$-loops we have
$$
\rule{35mm}{0mm}a_{_L}^{-1}=a_{_R}^{-1} \ \ \ and \ \ \
(a^{-1})^{-1}=a.\rule{35mm}{0mm}\Box{}
$$
\end{corollary}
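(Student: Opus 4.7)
The plan is to observe that the first identity $a_{_L}^{-1}=a_{_R}^{-1}$ was already established inside the proof of Theorem~\ref{T1.3}, and then to deduce the second identity $(a^{-1})^{-1}=a$ as an easy consequence.

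For the first part, I would simply extract the argument used at the start of the proof of Theorem~\ref{T1.3}: starting from $x_{_L}^{-1}\circ x=1$ and applying the dual automorphic property \eqref{e4} yields
$$1=(x_{_L}^{-1}\circ x)_{_R}^{-1}=x_{_R}^{-1}\circ(x_{_L}^{-1})_{_R}^{-1},$$
which combined with the defining relation $x_{_L}^{-1}\circ(x_{_L}^{-1})_{_R}^{-1}=1$ and the uniqueness of left loop-inverses in a loop forces $x_{_L}^{-1}=x_{_R}^{-1}$. Thus every $a\in Q$ has a two-sided loop-inverse $a^{-1}:=a_{_L}^{-1}=a_{_R}^{-1}$.

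For the second part, I would now use the equalities $a_{_L}^{-1}\circ a=1$ and $a\circ a_{_R}^{-1}=1$ rewritten as $a^{-1}\circ a=a\circ a^{-1}=1$. The first of these displays $a$ as a right loop-inverse of $a^{-1}$, while the second displays $a$ as a left loop-inverse of $a^{-1}$; by uniqueness in the loop, $(a^{-1})_{_L}^{-1}=(a^{-1})_{_R}^{-1}=a$, which is exactly $(a^{-1})^{-1}=a$.

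I do not anticipate any genuine obstacle here: the first identity is already contained in the preceding theorem, and the second is a direct bookkeeping step once one has a two-sided loop-inverse. The only point requiring minor care is to invoke the uniqueness of one-sided loop-inverses in a loop, which was recalled explicitly in Section~1.
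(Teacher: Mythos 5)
Your proposal is correct and follows essentially the same route the paper intends: the identity $a_{_L}^{-1}=a_{_R}^{-1}$ is exactly the computation carried out in the first half of the proof of Theorem~\ref{T1.3}, and $(a^{-1})^{-1}=a$ is the general observation about two-sided loop-inverses already recalled in Section~1, justified precisely by the uniqueness argument you give. Nothing further is needed.
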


This means that in the multiplication table of a $D$-loop $Q(\circ)$ its
neutral element is located symmetrically with respect to the main
diagonal and the class of all $D$-loops coincides with the class
of loops with the antiautomorphic inverse property but we'll keep the
term {\it $D$-loop} since it is shorter and more convenient to
use.

\section*{\centerline{2. Constructions of D-loops}}\setcounter{section}{2}\setcounter{theorem}{0}

Below we present several methods of verification when a given loop
is a $D$-loop. To describe these methods we must reminder some
definitions from \cite{Bel1}, \cite{D'91} and \cite{D'08}.

\begin{definition}
Let $Q(\cdot)$ be a loop. A permutation $\varphi_a$ of $Q$, where
$a\in Q$, is called a {\it right middle translation} or a {\it right track} (shortly: {\it track}) of $Q(\cdot)$ if
\begin{equation}\label{e6}
x\cdot\varphi_a(x)=a
\end{equation}
holds for all $x\in Q$. By a {\it left middle translation} or a {\it left track} we mean a
permutation $\lambda_a$ such that
\begin{equation}\label{e7}
\lambda_a(x)\cdot x=a .
\end{equation}
\end{definition}

It is clear that $\lambda_a=\varphi_a^{-1}$ and
$\varphi_1(x)=x_R^{-1}$ for all $a,x\in Q$.

\medskip
The permutation $\varphi_a$ selects in the multiplication table of
a given loop the number of columns in an element $a$ appears. For the loop defined in Example \ref{Ex-1}
$$
\varphi_4=\left(\arraycolsep=.7mm\begin{array}{ccccccccccc} 1&2&3&4&5&6&7\\
4&7&6&1&5&2&3\end{array}\right)=(1\,4)(2\,7\,3\,6)(5).$$ Further,
permutations will be written in the form of cycles, cycles will be
separated by dots. For example, the above permutation $\varphi_4$
will be written as $\varphi_4=(1\,4.\,2\,7\,3\,6.\,5.)$.

It is clear that a loop $Q(\cdot)$, where $Q=\{1,2,\ldots,n\}$,
can be identified with the set
$\{\varphi_1,\varphi_2,\ldots,\varphi_n,\}$ of its tracks.

\begin{theorem}\label{T2.2}                                                    
A loop $Q(\cdot)$ is a $D$-loop if and only if
\begin{equation}\label{e8}
\varphi_1\varphi_a\varphi_1=\varphi_{a^{-1}}^{-1}
\end{equation}
for every $a\in Q$, where $a^{-1}$ is $($right$)$ inverse to $a$.
\end{theorem}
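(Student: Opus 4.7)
The plan is to translate the operator identity \eqref{e8} into a pointwise equation in $Q(\cdot)$ and then compare it with the defining identity \eqref{e4}. Throughout, I would keep in mind that $\varphi_a(x)$ is characterized by $x\cdot\varphi_a(x)=a$ and that $\varphi_{a^{-1}}^{-1}=\lambda_{a^{-1}}$ is characterized by $\lambda_{a^{-1}}(x)\cdot x=a^{-1}$.

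For the forward implication I would assume $Q(\cdot)$ is a $D$-loop. By Corollary~\ref{C1.4} the map $\varphi_1$ sends $x$ to its two-sided inverse $x^{-1}$, and $(x^{-1})^{-1}=x$. Setting $y=\varphi_a(x^{-1})$, so that $x^{-1}\cdot y=a$, the $D$-property \eqref{e4} yields
$$
a^{-1}=(x^{-1}\cdot y)^{-1}=y^{-1}\cdot(x^{-1})^{-1}=y^{-1}\cdot x,
$$
which says precisely $y^{-1}=\lambda_{a^{-1}}(x)=\varphi_{a^{-1}}^{-1}(x)$. Since $y^{-1}=\varphi_1(y)=(\varphi_1\varphi_a\varphi_1)(x)$, identity \eqref{e8} follows.

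For the converse I would unpack \eqref{e8} pointwise. With $y=\varphi_a(x_R^{-1})$, the equation $x_R^{-1}\cdot y=a$ together with \eqref{e8} gives $y_R^{-1}\cdot x=a_R^{-1}$. As $a$ varies, $y$ runs over all of $Q$, so this is the universal identity $y_R^{-1}\cdot x=(x_R^{-1}\cdot y)_R^{-1}$ for $x,y\in Q$. Using $(u_L^{-1})_R^{-1}=u$, which holds in every loop, I would substitute $x=u_L^{-1}$ to obtain
\begin{equation*}
(u\cdot v)_R^{-1}=v_R^{-1}\cdot u_L^{-1}\qquad(u,v\in Q).\qquad(\star)
\end{equation*}

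The main obstacle is then to replace $u_L^{-1}$ by $u_R^{-1}$ so that $(\star)$ becomes exactly \eqref{e4}; a priori these need not coincide. This is overcome by specializing $(\star)$ at $v=1$: the left side becomes $u_R^{-1}$ while the right side becomes $1_R^{-1}\cdot u_L^{-1}=u_L^{-1}$, forcing $u_R^{-1}=u_L^{-1}$ for every $u\in Q$. Plugging this equality back into $(\star)$ produces \eqref{e4}, so $Q(\cdot)$ is a $D$-loop. The delicate point throughout is to keep left and right loop-inverses distinct until the specialization $v=1$ collapses them.
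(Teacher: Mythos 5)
Your proof is correct and follows essentially the same route as the paper: the forward direction applies the antiautomorphic inverse property pointwise to $x^{-1}\cdot\varphi_a(x^{-1})=a$, and the converse translates \eqref{e8} back into the identity $(x\cdot y)_R^{-1}=y_R^{-1}\cdot x_R^{-1}$. If anything, your converse is slightly more careful than the paper's, since you derive $u_L^{-1}=u_R^{-1}$ from \eqref{e8} by specializing at $v=1$ instead of tacitly treating $\varphi_1$ as an involution from the outset.
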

\begin{proof}
Let $Q(\cdot)$ be a D-loop. Then $x_R^{-1}=x^{-1}$ for every $x\in
Q$ and, according to \eqref{e6}, for all $a,x\in Q$ we have
$\varphi_a^{-1}(x)\cdot x=a$. Hence
$$
a^{-1}=(\varphi_a^{-1}(x)\cdot x)^{-1}=x^{-1}\cdot
(\varphi_a^{-1}(x))^{-1}=
\varphi_1(x)\cdot\varphi_1\varphi_a^{-1}(x).
$$
Since also $a^{-1}=\varphi_1(x)\cdot\varphi_{a^{-1}}\varphi_1(x)$,
from the above we obtain
$\varphi_1\varphi_a^{-1}=\varphi_{a^{-1}}\varphi_1$, which implies
\eqref{e8}.

Conversely, let $x\cdot y=a$. Then $y=\varphi_a(x)$. Hence
\[\arraycolsep=.5mm\begin{array}{rl}
y_R^{-1}\cdot x_R^{-1}&
=\varphi_1\varphi_a(x)\cdot\varphi_1(x)=\varphi_1\varphi_a\varphi_1(x^{-1})\cdot
x^{-1}\\[4pt]
&\stackrel{\eqref{e8}}{=}\varphi_{a^{-1}}^{-1}(x^{-1})\cdot
x^{-1}=\lambda_{a^{-1}}(x^{-1})\cdot
x^{-1}\stackrel{\eqref{e7}}{=}a_R^{-1}=(x\cdot y)_R^{-1}.
\end{array}
\]
This completes the proof.
\end{proof}

\begin{corollary}\label{C2.3}
A loop $Q(\cdot)$ is a $D$-loop if and only if it satisfies one of
the following identities:

\medskip$(a)$ \ $\varphi_1\varphi_a^{-1}\varphi_1=\varphi_{a^{-1}}$,

\medskip$(b)$ \ $\varphi_1 R_a\varphi_1=L_{a^{-1}}$,

\medskip$(c)$ \ $\varphi_1L_a\varphi_1=R_{a^{-1}}$.
\end{corollary}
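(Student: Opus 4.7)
The plan is to show that each of (a), (b), (c) is equivalent to the defining identity \eqref{e8} of Theorem \ref{T2.2}, so that the corollary splits into three parallel equivalences. In each forward direction I would use Corollary \ref{C1.4} to exploit that $\varphi_1$ is an involution in any $D$-loop (since $\varphi_1^{2}(a)=(a_{_R}^{-1})_{_R}^{-1}=(a^{-1})^{-1}=a$), and in each converse direction I would translate the operator identity into a pointwise equation in $Q$ and specialize one variable in order first to deduce $\varphi_1^{2}=\mathrm{id}$ and then to collapse the remaining content to the antiautomorphic inverse law \eqref{e4}.

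For (a) I would invert both sides of \eqref{e8} and use $\varphi_1^{-1}=\varphi_1$ to read off $\varphi_1\varphi_a^{-1}\varphi_1=\varphi_{a^{-1}}$. For the converse, evaluating (a) at a general $x$ via $\varphi_a^{-1}=\lambda_a$ and the defining relations \eqref{e6}, \eqref{e7} turns the operator identity into the pointwise statement $(z\cdot u)_R^{-1}=u_L^{-1}\cdot z_R^{-1}$ for all $z,u\in Q$ (where $u$ runs over $Q$ as $x$ does, with $x=u_L^{-1}$). Specializing $z=1$ yields $u_R^{-1}=u_L^{-1}$, so $\varphi_1$ is an involution, and the residual identity is then exactly \eqref{e4}, placing us in the setting of Theorem \ref{T2.2}.

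For (b) and (c) the translation is more transparent: in a $D$-loop the antiautomorphic inverse property reads $\varphi_1(x\cdot a)=a^{-1}\cdot x^{-1}$, i.e., $\varphi_1 R_a=L_{a^{-1}}\varphi_1$, which together with $\varphi_1^{-1}=\varphi_1$ yields (b); the argument for (c) is identical up to left--right symmetry. Conversely, applying (b) at $x$ gives $(x_{_R}^{-1}\cdot a)_{_R}^{-1}=a_{_R}^{-1}\cdot x$; specializing $a=1$ produces $\varphi_1^{2}=\mathrm{id}$, after which substituting $y=x_{_R}^{-1}$ recasts the identity as $(y\cdot a)^{-1}=a^{-1}\cdot y^{-1}$, namely \eqref{e4}. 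The main subtlety I anticipate is in case (a): merely inverting \eqref{e8} does not by itself yield (a) without prior knowledge that $\varphi_1$ is an involution, so in the converse the order of steps --- first establishing $\varphi_1^{2}=\mathrm{id}$ by specialization and only then extracting the antiautomorphic law --- is essential. The same single-variable specialization trick then makes (b) and (c) fall into line.
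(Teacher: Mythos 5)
Your proposal is correct and follows essentially the same route as the paper: both reduce $(a)$ to \eqref{e8} by substituting/inverting and using $\varphi_1\varphi_1=\mathrm{id}_Q$, and both obtain $(b)$ and $(c)$ from the operator form $\varphi_1 R_a=L_{a^{-1}}\varphi_1$ of the antiautomorphic inverse law. The one place you go beyond the paper is worthwhile: the paper's converse directions are left implicit (they rely on $\varphi_1\varphi_1=\mathrm{id}_Q$, which is only available once the loop is already known to be a $D$-loop), whereas your specialization of one variable to first establish $\varphi_1^{2}=\mathrm{id}$ before extracting \eqref{e4} fills exactly that gap.
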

\begin{proof}
Indeed, \eqref{e8} can be written in the form
$\varphi_1\varphi_{a^{-1}}\varphi_1=\varphi_a^{-1}$, which, in
view of $\varphi_1\varphi_1=id_Q$, is equivalent to $(a)$.
Moreover, $(x\cdot a)^{-1}=a^{-1}\cdot x^{-1}$ means that
$\varphi_1R_a=L_{a^{-1}}\varphi_1$. The last is equivalent to
$(b)$ and $(c)$.
\end{proof}

\begin{example}\label{Ex-2}
Consider the loop $Q(\cdot)$:
{\small{
$$\begin{array}{c|cccccccc}
 \cdot&1&2&3&4&5&6\\ \hline
 1&1&2&3&4&5&6\rule{0pt}{10pt}\\
 2&2&1&4&3&6&5\\
 3&3&5&1&6&4&2\\
 4&4&6&5&2&1&3\\
 5&5&3&6&1&2&4\\
 6&6&4&2&5&3&1
\end{array}$$}}

We will use Theorem \ref{T2.2} to verify that this loop is a
$D$-loop.

We have
\[\begin{array}{lllllll}
\varphi_1=(1.\,2.\,3.\,4\,5.\,6.)&\rule{10mm}{0mm}&\varphi_4=(1\,4.\,2\,3\,5\,6.)\\[2pt]
\varphi_2=(1\,2.\,3\,6.\,4.\,5.)&&\varphi_5=(1\,5.\,2\,6\,4\,3.)\\[2pt]
\varphi_3=(1\,3.\,2\,4\,6\,5.)&&\varphi_6=(1\,6.\,2\,5\,3\,4.)
\end{array}
\]
We have to check the condition \eqref{e8} for $a=2,3,4,5,6$
because $\varphi_1\varphi_1\varphi_1=\varphi_1^{-1}$ holds in each
loop. Permutations $\varphi_1$ and $\varphi_2$ have disjoint
cycles hence
$\varphi_1\varphi_2\varphi_1=\varphi_2=\varphi_2^{-1}$. In other
cases we obtain:
\[\begin{array}{lllllll}
\varphi_1\varphi_3\varphi_1=(1\,3.\,2\,5\,6\,4.)=\varphi_3^{-1}&\rule{8mm}{0mm}
&\varphi_1\varphi_5\varphi_1=(1\,4.\,2\,6\,5\,3.)=\varphi_4^{-1}\\[2pt]
\varphi_1\varphi_4\varphi_1=(1\,5.\,2\,3\,4\,6.)=\varphi_5^{-1}&\rule{8mm}{0mm}
&\varphi_1\varphi_6\varphi_1=(1\,6.\,2\,4\,3\,5.)=\varphi_6^{-1}
\end{array}
\]
This shows that $Q(\circ)$ is a $D$-loop. \hfill$\Box{}$
\end{example}

Note that in general loops isotopic to $D$-loops are not $D$-loops.

\begin{example}\label{Ex-3}
The following loop
{\small{$$
\begin{array}{c|ccccccccccc}
 \circ&1&2&3&4&5&6\\ \hline
 1&1&2&3&4&5&6\rule{0pt}{10pt}\\
 2&2&1&6&3&4&5\\
 3&3&6&2&5&1&4\\
 4&4&5&1&6&2&3\\
 5&5&3&4&1&6&2\\
 6&6&4&5&2&3&1
\end{array}
$$}}\noindent
is isotopic to a $D$-loop $Q(\circ)$ from the previous example.
This isotopy has the form $\gamma(x\circ y)=
\alpha(x)\cdot\beta(y)$, where $\alpha=(1\,4\,2.\,3.\,5.\,6.)$,
$\beta=(1\,2\,5\,4\,6.\,3.)$, $\gamma=(1\,6\,4\,3\,5\,2.)$. The
loop $Q(\circ)$ is not a $D$-loop since $3_{_L}^{-1}\ne
3_{_R}^{-1}.$ \hfill$\Box{}$
\end{example}

\begin{theorem}\label{T2.6}
Let $Q(\cdot)$ be an IP-loop and let $a\in Q$ be fixed. If an element $a'\in Q$ is inverse to $a$ in $Q(\cdot)$, then $Q(\circ)$ with the operation
\begin{equation}\label{e9}
x\circ y=R_{a'}(x)\cdot L_a(y)
\end{equation}
is a D-loop with the same identity as in $Q(\cdot)$.
\end{theorem}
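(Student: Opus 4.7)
The plan is to do three short computations, all of which use only the IP identities \eqref{e1}--\eqref{e3} applied inside $Q(\cdot)$.

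First, I would verify that $1$ (the identity of $Q(\cdot)$) really is the identity of $Q(\circ)$. Using $R_{a'}(1)=a'$ and $L_{a}(y)=a\cdot y$ together with the IP-identity $a'\cdot(a\cdot y)=y$, one sees $1\circ y=(1\cdot a')\cdot(a\cdot y)=a'\cdot(a\cdot y)=y$. Symmetrically $x\circ 1=(x\cdot a')\cdot a=x$ from $(x\cdot a')\cdot a=x\cdot(a'\cdot a)$... more precisely one uses $R_{a'}R_a=\mathrm{id}$, which is \eqref{e1}. Since $R_{a'}$ and $L_a$ are bijections, $Q(\circ)$ is a quasigroup, hence a loop with identity $1$.

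Next I would identify the loop-inverse in $Q(\circ)$. I expect to show that for every $x\in Q$ the IP-inverse $x'$ taken inside $Q(\cdot)$ is a two-sided loop-inverse of $x$ in $Q(\circ)$. To check $x\circ x'=1$ one computes $(x\cdot a')\cdot(a\cdot x')$ and applies \eqref{e3} inside the IP-loop to see $(a\cdot x')=(x\cdot a')'$, so the product collapses to $1$; similarly $x'\circ x=(x'\cdot a')\cdot(a\cdot x)$ collapses by writing $x'\cdot a'=(a\cdot x)'$. Thus in $Q(\circ)$ we have $x_{_L}^{-1}=x_{_R}^{-1}=x'$ for every $x$.

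The main step, and the only real content, is the antiautomorphic identity. By Corollary~\ref{C1.4} it is enough to verify
\[
(x\circ y)'=y'\circ x'
\]
with $'$ meaning the loop-inverse in $Q(\circ)$, which by the previous paragraph coincides with the IP-inverse in $Q(\cdot)$. Expanding the left-hand side and applying \eqref{e3} twice in $Q(\cdot)$ gives
\[
(x\circ y)'=\bigl((x\cdot a')\cdot(a\cdot y)\bigr)'=(a\cdot y)'\cdot(x\cdot a')'=(y'\cdot a')\cdot(a\cdot x'),
\]
while the right-hand side expands by the very definition \eqref{e9} to $y'\circ x'=(y'\cdot a')\cdot(a\cdot x')$. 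The two expressions agree, so $Q(\circ)$ satisfies the antiautomorphic inverse property, i.e.\ it is a $D$-loop.

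The only place where care is needed is the bookkeeping between the two operations and the two inversions: one must be sure that the inverse appearing in \eqref{e4} for $Q(\circ)$ really is the one for which the identity \eqref{e3} of the underlying IP-loop $Q(\cdot)$ can be invoked. That is precisely what the second step establishes, and once it is in hand the third step is a one-line application of \eqref{e3}.
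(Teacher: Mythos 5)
Your proof is correct and follows essentially the same route as the paper's: verify that $1$ remains the identity, identify the loop-inverse in $Q(\circ)$ with the IP-inverse in $Q(\cdot)$, and then apply \eqref{e3} twice to $\bigl((x\cdot a')\cdot(a\cdot y)\bigr)'$ to get $(y'\cdot a')\cdot(a\cdot x')=y'\circ x'$. The only cosmetic differences are that the paper identifies the inverse by solving $x\circ\overline{x}=1$ rather than checking both sides directly, and your appeal to Corollary~\ref{C1.4} is unnecessary (and slightly circular in spirit) since your second step already establishes two-sidedness on its own.
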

\begin{proof}
It is clear that $Q(\circ)$ is a quasigroup. Let an element $a'\in
Q$ be inverse to $a$ in $Q(\cdot)$. Then
$$
x\circ 1=R_{a'}(x)\cdot L_a(1)=(x\cdot a')\cdot a=x.
$$
Similarly $1\circ x=x$. Hence $Q(\circ)$ is a loop with the same
identity as in $Q(\cdot)$.

Moreover, for every $x\in Q$ there exists $\overline{x}\in Q$ such
that
$$
1=x\circ\overline{x}=R_{a'}(x)\cdot L_a(\overline{x})=(x\cdot
a')\cdot(a\cdot\overline{x}),
$$
which gives $a\cdot\overline{x}=(x\cdot a')^{-1}=(a')^{-1}\cdot
x^{-1}=a\cdot x^{-1}$. Thus $\overline{x}=x^{-1}$ for every $x\in
Q$. Hence
\[\arraycolsep=.5mm
\begin{array}{rl}
(x\circ y)^{-1}&=\left(R_{a'}(x)\cdot
L_a(y)\right)^{-1}=\left((x\cdot a')\cdot (a\cdot
y)\right)^{-1}\\[2mm]
&=(a\cdot y)^{-1}\cdot (x\cdot a')^{-1}=(y^{-1}\cdot
a^{-1})\cdot((a')^{-1}\cdot x^{-1})\\[2mm]
&=(y^{-1}\cdot a')\cdot (a\cdot x^{-1})=R_{a'}(y^{-1})\cdot
L_a(x^{-1})=y^{-1}\circ x^{-1}.
\end{array}
\]
Therefore $Q(\circ)$ is a $D$-loop.
\end{proof}

\begin{corollary}
Any IP-loop of order $n$ determines\ $n-1$ isotopic $D$-loops.
\end{corollary}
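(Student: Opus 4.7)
The plan is to apply Theorem~\ref{T2.6} once for each element $a\in Q$. Since $Q(\cdot)$ is an IP-loop, every $a\in Q$ possesses a two-sided inverse $a'$, so the hypothesis of Theorem~\ref{T2.6} is satisfied for every choice of $a$. For a fixed $a\in Q$, the recipe $x\circ y=R_{a'}(x)\cdot L_a(y)$ exhibits $Q(\circ)$ as a principal isotope of $Q(\cdot)$ via the triple $(R_{a'},L_a,id_Q)$, and Theorem~\ref{T2.6} guarantees that this isotope is a $D$-loop. So each choice of $a$ delivers a $D$-loop isotopic to the original IP-loop.

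Next I would single out the trivial case $a=1$. In any loop the identity is its own inverse, hence $a'=1$ and $R_1=L_1=id_Q$, so $x\circ y=x\cdot y$, i.e.\ the resulting $D$-loop is just $Q(\cdot)$ itself and does not yield a new isotope. Running the construction over the $n-1$ remaining elements $a\in Q\setminus\{1\}$ therefore produces the asserted $n-1$ isotopic $D$-loops.

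There is essentially no real obstacle here: the corollary is a counting remark sitting on top of Theorem~\ref{T2.6}. The only point worth flagging is the reading of the statement: "$n-1$ isotopic $D$-loops" should be understood as counting the constructions produced by (\ref{e9}), each isotopic to $Q(\cdot)$, rather than asserting that the $n-1$ resulting loops are pairwise non-isomorphic (two different choices of $a$ might in principle give the same loop). With this understood, the corollary follows immediately by invoking Theorem~\ref{T2.6} once for each $a\in Q\setminus\{1\}$.
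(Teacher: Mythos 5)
Your proposal is correct and matches the paper's (implicit) argument: the corollary is stated without proof as an immediate consequence of Theorem~2.6, obtained by running the construction \eqref{e9} over the $n-1$ non-identity elements $a\in Q$, exactly as you do. Your remark that the count refers to the constructions (which need not be pairwise non-isomorphic) is a reasonable and accurate reading of the statement.
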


\begin{example}\label{Ex-4} Starting from the following {\it
IP}-loop:
{\small{
$$
\begin{array}{c|ccccccccccc}
\cdot&1&2&3&4&5&6&7\\ \hline
 1&1&2&3&4&5&6&7\rule{0pt}{10pt}\\
 2&2&3&1&6&7&5&4\\
 3&3&1&2&7&6&4&5\\
 4&4&7&6&5&1&2&3\\
 5&5&6&7&1&4&3&2\\
 6&6&4&5&3&2&7&1\\
 7&7&5&4&2&3&1&6
\end{array}
$$}}\noindent
and using \eqref{e9} with $a=2$ we obtain a $D$-loop:
{\small{
$$
\begin{array}{c|ccccccccccc}
\circ&1&2&3&4&5&6&7\\ \hline
 1&1&2&3&4&5&6&7\rule{0pt}{10pt}\\
 2&2&3&1&6&7&5&4\\
 3&3&1&2&5&4&7&6\\
 4&4&5&6&7&1&2&3\\
 5&5&4&7&1&6&3&2\\
 6&6&7&5&3&2&4&1\\
 7&7&6&4&2&3&1&5
\end{array}
$$}}\noindent
which is not an {\it IP}-loop because $3\circ (2\circ 5)\ne 5$.
Hence $a=2$ is not an inverse element in $Q(\circ)$. Putting
$x*y=R_3(x)\circ L_2(y)$ we obtain a quasigroup:
{\small{
$$
\begin{array}{c|ccccccccccc}
 *&1&2&3&4&5&6&7\\ \hline
 1&1&2&3&7&6&4&5\rule{0pt}{10pt}\\
 2&2&3&1&6&7&5&4\\
 3&3&1&2&5&4&7&6\\
 4&7&5&6&4&1&2&3\\
 5&6&4&7&1&5&3&2\\
 6&4&7&5&3&2&6&1\\
 7&5&6&4&2&3&1&7
\end{array}
$$}}\noindent
which is isotopic to the initial $D$-loop $Q(\circ)$ but it is not
a $D$-loop. This means that in Theorem \ref{T2.6} the assumption
on $a$ can not be ignored.\hfill$\Box{}$
\end{example}

\begin{proposition}\label{P2.9}
An element $a\in Q$ used in Theorem $\ref{T2.6}$ has the same inverse element in
$Q(\cdot)$ and $Q(\circ)$ defined by \eqref{e9} if and only if
\begin{equation}\label{e10}
x\cdot a=x\circ a \ \ \ and \ \ \ a'\cdot x=a'\circ x
\end{equation}
for all $x\in Q$.
\end{proposition}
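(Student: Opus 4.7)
The plan is to read ``$a$ has the same inverse element in $Q(\cdot)$ and $Q(\circ)$'' in the antiautomorphic/IP sense of the introduction: that $a'$ (the IP-inverse of $a$ in the IP-loop $Q(\cdot)$) continues to be an inverse of $a$ in $Q(\circ)$. Equivalently, writing $L^\circ_b, R^\circ_b$ for the translations in $Q(\circ)$, I want to show that $L^\circ_{a'} L^\circ_a = \mathrm{id}_Q = R^\circ_{a'} R^\circ_a$ is equivalent to \eqref{e10}.

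The key observation is that two of the four translations involved are already unchanged by passing from $\cdot$ to $\circ$. Indeed, from \eqref{e9} together with $a\cdot a' = 1$, one computes
\[
L^\circ_a(x) = (a\cdot a')\cdot(a\cdot x) = a\cdot x = L_a(x),\qquad
R^\circ_{a'}(x) = (x\cdot a')\cdot(a\cdot a') = x\cdot a' = R_{a'}(x),
\]
so $L^\circ_a = L_a$ and $R^\circ_{a'} = R_{a'}$ hold with no extra hypothesis. Inserting these two identities into the IP-conditions and using $L_a^{-1}=L_{a'}$, $R_a^{-1}=R_{a'}$ from the IP-property of $Q(\cdot)$, the conditions collapse exactly to $L^\circ_{a'} = L_{a'}$ and $R^\circ_a = R_a$, which is precisely \eqref{e10}. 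The converse is just the same chain read in reverse: \eqref{e10} combined with the IP-property of $Q(\cdot)$ gives $L^\circ_{a'}L^\circ_a = L_{a'}L_a = \mathrm{id}$ and $R^\circ_{a'}R^\circ_a = R_{a'}R_a = \mathrm{id}$.

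The subtle point, and the step I would want to verify first, is the interpretation of ``inverse'': since $Q(\circ)$ is a $D$-loop, every element in it already possesses a unique two-sided loop-inverse, and the calculation $\overline{x}=x^{-1}$ inside the proof of Theorem \ref{T2.6} in particular shows that the loop-inverse of $a$ in $Q(\circ)$ always coincides with $a'$. Hence sameness of loop-inverses would be automatic and the statement vacuous. Example \ref{Ex-4}, in which \eqref{e10} fails for $a = 2$, confirms that ``inverse'' here must be read in the stronger antiautomorphic/IP sense recalled in Section~1. Once this is fixed, the proof is purely a translation-level computation and presents no genuine obstacle.
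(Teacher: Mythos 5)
Your proposal is correct and follows essentially the same route as the paper: both arguments rest on the observation that $a\cdot a'=1$ forces $a\circ x=a\cdot x$ and $x\circ a'=x\cdot a'$ unconditionally, after which the inverse property of $Q(\cdot)$ turns the requirement that $a'$ remain an inverse (in the IP sense) of $a$ in $Q(\circ)$ into exactly the two identities of \eqref{e10}. Your reading of ``inverse'' as the IP-inverse rather than the loop-inverse is also the one the paper's proof uses (it invokes $(z\circ a')\circ a=z$ and $a'\circ(a\circ z)=z$), so the only difference is that you package the element-wise computations as statements about translations, which is a harmless reorganization.
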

\begin{proof}
Let $a'\in Q$ be inverse to $a$ in $Q(\cdot)$ and $Q(\circ)$. Then $a=(a')'$ and
$$
z=(z\circ a')\circ a=\left(R_{a'}(z)\cdot L_a(a')\right)\circ
a=R_{a'}(z)\circ a ,
$$
which for $z=x\cdot a$ gives $x\cdot a=x\circ a$.

Similarly,
$$
z=a'\circ (a\circ z)=a'\circ(R_{a'}(a)\cdot L_a(z))=a'\circ L_a(z)
$$
for $z=a'\cdot x$ implies $a'\cdot x=a'\circ x$.

Conversely, if $a'\in Q$ is inverse to $a$ in $Q(\cdot)$ and \eqref{e10} are satisfied, then
$$
(x\circ a)\circ a'=(x\cdot a)\circ a'=R_{a'}(x\cdot a)\cdot
L_a(a')= ((x\cdot a)\cdot a')\cdot (a\cdot a')=x.
$$
Analogously $a'\circ(a\circ x)=a'\cdot(a\circ x)=x$. Hence $a'$ is
inverse to $a$ in $Q(\circ)$.
\end{proof}

\begin{corollary}
An element $a\in Q$ used in Theorem $\ref{T2.6}$ has the same inverse element in $Q(\cdot)$ and $Q(\circ)$ defined by \eqref{e9} if and only if the
multiplication tables of these two loops have the same
$a$--columns and the same $a'$--rows.
\end{corollary}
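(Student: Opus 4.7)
The plan is to reduce this statement directly to Proposition \ref{P2.9}. By that proposition, the element $a$ has the same inverse $a'$ in both $Q(\cdot)$ and $Q(\circ)$ if and only if the two identities in \eqref{e10} hold, namely $x\cdot a=x\circ a$ and $a'\cdot x=a'\circ x$ for every $x\in Q$. So it suffices to show that each of these two identities is precisely the statement that one particular column (respectively row) of the two multiplication tables agrees entry-by-entry.

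The translation is purely a reading of what it means to write down a Cayley table. The $a$-column of $Q(\cdot)$ is by definition the list of values $x\cdot a$ as $x$ ranges over $Q$, and the $a$-column of $Q(\circ)$ is the list of values $x\circ a$; hence these two columns coincide if and only if $x\cdot a=x\circ a$ for every $x\in Q$, which is the first half of \eqref{e10}. Symmetrically, the $a'$-row of $Q(\cdot)$ consists of the entries $a'\cdot x$ and the $a'$-row of $Q(\circ)$ consists of the entries $a'\circ x$; these coincide precisely when $a'\cdot x=a'\circ x$ for every $x\in Q$, which is the second half of \eqref{e10}.

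Putting these two observations together, the conjunction of equalities \eqref{e10} is equivalent to the geometric statement that the $a$-columns agree and the $a'$-rows agree. Applying Proposition \ref{P2.9} finishes the proof. There is no real obstacle here — the entire content is in Proposition \ref{P2.9}, and the corollary merely rephrases \eqref{e10} in the visually convenient language of multiplication tables.
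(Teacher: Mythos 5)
Your proposal is correct and matches the paper's intent exactly: the paper states this corollary without proof as an immediate consequence of Proposition~\ref{P2.9}, and your reduction---observing that the $a$-column lists the values $x\cdot a$ (resp.\ $x\circ a$) and the $a'$-row lists the values $a'\cdot x$ (resp.\ $a'\circ x$), so that \eqref{e10} is precisely the coincidence of these columns and rows---is the intended argument. Nothing further is needed.
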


\begin{proposition}\label{P2.11}
An element $a\in Q$ used in Theorem $\ref{T2.6}$ has the same inverse element in $Q(\cdot)$ and $Q(\circ)$ defined by \eqref{e9} if and only if
\begin{equation}\label{e11}
L_aL_a=L_{a^2}\ \ \ and \ \ \ R_aR_a=R_{a^2}
\end{equation}
where $L_a$ and $R_a$ are translations in $Q(\cdot)$.
\end{proposition}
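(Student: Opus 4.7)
The plan is to use Proposition \ref{P2.9} as the bridge: the condition on $a$ is equivalent to the two pointwise identities
$$x\cdot a=x\circ a \qquad \text{and} \qquad a'\cdot x=a'\circ x \quad \text{for all } x\in Q,$$
so it suffices to show that the first identity is equivalent to $R_aR_a=R_{a^2}$ and the second is equivalent to $L_aL_a=L_{a^2}$. I would treat the two sides symmetrically and handle the right-translation identity first.

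Expanding with \eqref{e9}, we compute $x\circ a=R_{a'}(x)\cdot L_a(a)=(x\cdot a')\cdot a^2=R_{a^2}R_{a'}(x)$. Hence the pointwise identity $x\cdot a=x\circ a$ says exactly that $R_a=R_{a^2}R_{a'}$ as permutations. Since $Q(\cdot)$ is an IP-loop, \eqref{e1} gives $R_{a'}=R_a^{-1}$, so multiplying on the right by $R_a$ converts this to $R_aR_a=R_{a^2}$. Each step is reversible, giving the desired equivalence.

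For the left side, the same kind of computation yields $a'\circ x=R_{a'}(a')\cdot L_a(x)=(a')^2\cdot(a\cdot x)=L_{(a')^2}L_a(x)$, so $a'\cdot x=a'\circ x$ is equivalent to $L_{a'}=L_{(a')^2}L_a$. Now the IP-loop properties \eqref{e2} and \eqref{e3} give $(a')^2=(a\cdot a)'=(a^2)'$, while \eqref{e1} gives $L_{a'}=L_a^{-1}$ and $L_{(a^2)'}=L_{a^2}^{-1}$. Substituting, the condition becomes $L_a^{-1}=L_{a^2}^{-1}L_a$, which rearranges to $L_aL_a=L_{a^2}$.

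I do not foresee a serious obstacle: the only subtlety is that in the left-hand case we must invoke \eqref{e3} to identify $(a')^2$ with $(a^2)'$ before we can apply \eqref{e1}, and we must be careful that all the substitutions use only identities already available in an IP-loop (not in $Q(\circ)$, whose inverses we are still analysing). Putting the two equivalences together with Proposition \ref{P2.9} yields the claim.
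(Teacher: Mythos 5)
Your proof is correct and takes essentially the same route as the paper's: both expand the $Q(\circ)$-translations via \eqref{e9} (obtaining $x\circ a=R_{a^2}R_{a'}(x)$ and the analogous left-hand expression) and then invoke the IP-loop identities \eqref{e1}--\eqref{e3} to arrive at \eqref{e11}. The only difference is organizational: you pass explicitly through Proposition \ref{P2.9} and condition \eqref{e10}, which turns the whole argument into a reversible chain of equivalences and thereby also supplies the converse direction that the paper merely declares obvious.
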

\begin{proof}
Let $a\in Q$ has the same inverse $a'$ in $Q(\cdot)$ and $Q(\circ)$. Then for every $x\in Q$ we have
\[\arraycolsep=.5mm\begin{array}{lll}
x&=a'\circ(a\circ x)=R_{a'}(a')\cdot L_a(R_{a'}(a)\cdot L_a(x))\\[2pt]
&=R_{a'}(a')L_aL_a(x)=L_{a'\cdot a'}L_aL_a(x)=L_{(a^2)'}L_aL_a(x),
\end{array}
\]
whence, applying \eqref{e1}, we get $L_aL_a=L_{(a^2)'}^{-1}=L_{a^2}$.

Similarly, for every $z\in Q$ we have
$$
z\circ a=R_{a'}(z)\cdot L_a(a)=R_{a^2}R_{a'}(z) ,
$$
which for $z=R_a(x)$, by \eqref{e1}, gives
$$
R_a(x)\circ a=R_{a^2}R_{a'}R_a(x)=R_{a^2}(x).
$$
Hence $R_aR_a=R_{a^2}$. This proves \eqref{e11}.

The converse statement is obvious.
\end{proof}

Below we present a simple method of construction of new loops from given loops. This method is based on {\it exchange of tracks}. Next, this method will be applied to the construction of $D$-loops.

\medskip

Let $\{\varphi_1,\varphi_2,\ldots,\varphi_n\}$ be tracks of a $D$-loop $Q(\cdot)$ with the identity $1$. We say that for $i\ne j\ne 1$ tracks $\varphi_i$, $\varphi_j$ are {\it decomposable} if there exist two nonempty subsets $X,Y$ of $Q$ such that $Q=X\cup Y,$ $X\cap Y=\emptyset$, $1\in X$ and
\begin{equation}\label{e12}
\left\{\begin{array}{l}\varphi_i=\bar{\varphi}_i\hat{\varphi}_i\\
\varphi_j=\bar{\varphi}_j\hat{\varphi}_j
\end{array}\right.
\end{equation}
where $\bar{\,\varphi}_i,\bar{\,\varphi}_j$ are permutations of $X$, $\hat{\varphi}_i,\hat{\varphi}_j$ are permutations of $Y.$

Putting
\begin{equation}\label{e13}
\left\{\begin{array}{l}
\psi_i=\bar{\varphi}_i\hat{\varphi}_j\\
\psi_j=\bar{\varphi}_j\hat{\varphi}_i
\end{array}\right.
\end{equation}
and $\psi_k=\varphi_k$ for $k\not\in\{i,j\}$ we obtain the new system of tracks which defines on $Q$ the new loop $Q(\circ)$ with the same identity as in $Q(\cdot)$.

\begin{example}\label{Ex-5}The loop $Q(\cdot)$ defined by
{\small{$$
\begin{array}{c|ccccccccccccccc}
 \cdot&1&2&3&4&5&6&7&8\\ \hline
 1&1&2&3&4&5&6&7&8\rule{0pt}{10pt}\\
 2&2&3&4&1&6&7&8&5\\
 3&3&4&1&2&7&8&5&6\\
 4&4&1&2&3&8&5&6&7\\
 5&5&8&7&6&1&4&3&2\\
 6&6&5&8&7&2&1&4&3\\
 7&7&6&5&8&3&2&1&4\\
 8&8&7&6&5&4&3&2&1
\end{array}
$$}}\noindent
is a group (so, it is a $D$-loop) with the following tracks:
$$
\begin{array}{lllllll}
\varphi_1=(1.24.3.4.5.6.7.8.)&&\varphi_2=(12.34.5876.)&&\varphi_3=(13.2.4.57.68.)\\
\varphi_4=(14.23.5678.)&&\varphi_5=(15.37.2846.)&&\varphi_6=(16.38.2547.)\\
\varphi_7=(17.35.2648.)&&\varphi_8=(18.36.2745.)&&
\end{array}
$$
For $(i,j)\in\{(2,3),(2,4),(3,4),(5,7),(6,8)\}$ tracks $\varphi_i,\varphi_j$ are decomposable.
In the case $(i,j)=(6,8)$ we have
$$
\left\{\begin{array}{l}
\varphi_6=\bar{\varphi}_6\hat{\varphi}_6, \ \ {\rm where} \ \ \bar{\varphi}_6=(16.38.), \ \hat{\varphi}_6=(2547.)\\
\varphi_8=\bar{\varphi}_8\hat{\varphi}_8,\ \ {\rm where} \ \ \bar{\varphi}_8=(18.36.), \ \hat{\varphi}_8=(2745.)
\end{array}\right.
$$
whence, according to \eqref{e13}, we obtain
$$
\left\{\begin{array}{l}\psi_6=\bar{\varphi}_6\hat{\varphi}_8=(16.38.2745.)\\ \psi_8=\bar{\varphi}_8\hat{\varphi}_6=(18.36.2547.)
\end{array}\right.
$$
and $\psi_k=\varphi_k$ for $k=1,2,3,4,5,7$.

This new system of tracks $\{\psi_1,\psi_2,\ldots,\psi_8\}$ defines the loop $Q(\circ)$:
{\small{$$
\begin{array}{c|ccccccccccccccc}
 \circ&1&2&3&4&5&6&7&8\\ \hline
 1&1&2&3&4&5&6&7&8\rule{0pt}{10pt}\\
 2&2&3&4&1&\frame{\,8\,\rule{0pt}{8pt}}&7&\frame{\,6\,\rule{0pt}{8pt}}&5\\
 3&3&4&1&2&7&8&5&6\\
 4&4&1&2&3&\frame{\,6\,\rule{0pt}{8pt}}&5&\frame{\,8\,\rule{0pt}{8pt}}&7\\
 5&5&\frame{\,6\,\rule{0pt}{8pt}}&7&\frame{\,8\,\rule{0pt}{8pt}}&1&4&3&2\\
 6&6&5&8&7&2&1&4&3\\
 7&7&\frame{\,8\,\rule{0pt}{8pt}}&5&\frame{\,6\,\rule{0pt}{8pt}}&3&2&1&4\\
 8&8&7&6&5&4&3&2&1
\end{array}
$$}}\noindent
where items changed by tracks $\psi_6$ and $\psi_8$ are entered in the box.
\hfill$\Box{}$
\end{example}

This new loop $Q(\circ)$ can be used for the construction of
another loop since it has the same pair of decomposable tracks as
$Q(\cdot)$. So, for the construction of new loops we can use not
only one but also two or more pairs of decomposable tracks. Using
different pairs of decomposable tracks we obtain different loops
which may not be isotopic. Obtained loops may not be isotopic to
the initial loop $Q(\cdot)$, too.

\begin{example}\label{Ex-6}
Direct computations show that this loop
{\small{
$$
\begin{array}{c|ccccccccccc}
\cdot&1&2&3&4&5&6&7&8\\ \hline                                                                                 1&1&2&3&4&5&6&7&8\\
2&2&1&5&7&3&8&4&6\\                                                                                            3&3&8&6&1&4&2&5&7\\                                                                                            4&4&6&1&5&7&3&8&2\\                                                                                            5&5&7&4&2&8&1&6&3\\                                                                                            6&6&4&8&3&1&7&2&5\\                                                                                            7&7&5&2&8&6&4&3&1\\                                                                                            8&8&3&7&6&2&5&1&4
\end{array}$$}}\noindent
is a $D$-loop. It hasn't got any decomposable pair of tracks.
\hfill$\Box{}$
\end{example}

\begin{theorem}\label{T2.14}
Let $Q(\cdot)$ be a $D$-loop with the identity $1$. If $\varphi_i,\varphi_j$, where $i\cdot j=1$ and $i\ne j$, are decomposable tracks of $Q(\cdot)$, then a loop $Q(\circ)$ obtained from $Q(\cdot)$ by exchange of tracks is a $D$-loop.
\end{theorem}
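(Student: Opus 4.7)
The plan is to apply Theorem \ref{T2.2} to $Q(\circ)$: I will verify the identity $\psi_1\psi_a\psi_1 = \psi_{a^{-1}}^{-1}$ for every $a \in Q$, where $a^{-1}$ is the right loop-inverse in $Q(\circ)$. Since $1 \notin \{i,j\}$, we have $\psi_1 = \varphi_1$, so the right inverse of $a$ in $Q(\circ)$ is $\psi_1(a) = \varphi_1(a)$, agreeing with the inverse in $Q(\cdot)$. The condition $i\cdot j = 1$ makes $\{i,j\}$ closed under inversion; hence for $a \notin \{i,j\}$ one also has $a^{-1} \notin \{i,j\}$, so $\psi_a = \varphi_a$ and $\psi_{a^{-1}} = \varphi_{a^{-1}}$, and the identity reduces to the corresponding identity in $Q(\cdot)$ guaranteed by Theorem \ref{T2.2}.

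The real content is in the cases $a=i$ and $a=j$. For $a=i$ the goal is
\begin{equation*}
\varphi_1(\bar\varphi_i\hat\varphi_j)\varphi_1 \;=\; \psi_j^{-1} \;=\; \hat\varphi_i^{-1}\bar\varphi_j^{-1}.
\end{equation*}
The pivotal structural claim is that $\varphi_1$ itself preserves the partition $Q = X \cup Y$, and this is the step I expect to be the main obstacle. The starting points are immediate: $\varphi_i(1)=i$ and $\varphi_j(1)=j$ place $\{1,i,j\}$ inside $X$, while $ij=1$ gives $\varphi_1(i)=j$ and $\varphi_1(j)=i$, so $\varphi_1$ fixes this triple setwise. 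To extend this to all of $X$, I would use that the identity $\varphi_1\varphi_i\varphi_1 = \varphi_j^{-1}$ implies $\varphi_1$ normalises $\langle\varphi_i,\varphi_j\rangle$ and therefore permutes its orbits on $Q$; each such orbit lies wholly in $X$ or in $Y$, and the $D$-loop identity $(xy)^{-1} = y^{-1}x^{-1}$ together with $ij=1$ should rule out any orbit of $X$ being sent into $Y$ under inversion. Making this argument airtight is the hardest part of the proof.

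Once $\varphi_1(X) = X$ is in hand, write $\varphi_1 = \bar\varphi_1\hat\varphi_1$. Splitting $\varphi_1\varphi_i\varphi_1 = \varphi_j^{-1}$ into its $X$- and $Y$-parts and invoking uniqueness of the disjoint decomposition yields $\bar\varphi_1\bar\varphi_i\bar\varphi_1 = \bar\varphi_j^{-1}$; analogously $\varphi_1\varphi_j\varphi_1 = \varphi_i^{-1}$ gives $\hat\varphi_1\hat\varphi_j\hat\varphi_1 = \hat\varphi_i^{-1}$. Since permutations with disjoint supports commute, inserting $\varphi_1^2 = \mathrm{id}$ between $\bar\varphi_i$ and $\hat\varphi_j$ leads to
\begin{equation*}
\varphi_1(\bar\varphi_i\hat\varphi_j)\varphi_1 \;=\; (\bar\varphi_1\bar\varphi_i\bar\varphi_1)(\hat\varphi_1\hat\varphi_j\hat\varphi_1) \;=\; \bar\varphi_j^{-1}\hat\varphi_i^{-1} \;=\; \psi_j^{-1},
\end{equation*}
as required, and the case $a=j$ is entirely symmetric.
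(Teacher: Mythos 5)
Your strategy and your final computation coincide with the paper's own proof: reduce everything to Theorem \ref{T2.2}, dispose of $k\notin\{i,j\}$ by noting that $\psi_1=\varphi_1$ and that $\{i,j\}$ is closed under inversion because $i\cdot j=1$, and for $k=i$ conjugate $\bar{\varphi}_i\hat{\varphi}_j$ by $\varphi_1$ factorwise to get $\bar{\varphi}_j^{-1}\hat{\varphi}_i^{-1}=\psi_j^{-1}=\psi_{i^{-1}}^{-1}$. The step you single out as the main obstacle --- that $\varphi_1$ preserves the partition $Q=X\cup Y$, so that $\varphi_1=\bar{\varphi}_1\hat{\varphi}_1$ and the identities $\varphi_1\varphi_i\varphi_1=\varphi_j^{-1}$, $\varphi_1\varphi_j\varphi_1=\varphi_i^{-1}$ can be split into their $X$- and $Y$-components --- is precisely the point the paper passes over in silence: its equality $(\varphi_1\bar{\varphi}_i\varphi_1)(\varphi_1\hat{\varphi}_j\varphi_1)=\bar{\varphi}_j^{-1}\hat{\varphi}_i^{-1}$ is justified there only by $\varphi_1^2=\varepsilon$, disjointness of supports and $i\cdot j=1$, none of which gives $\varphi_1(X)=X$.

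Your sketch for closing this step does not yet close it. That $\varphi_1$ normalises $G=\langle\varphi_i,\varphi_j\rangle$ does show that $\varphi_1$ permutes the $G$-orbits, and since $\varphi_1(1)=1$ it fixes the orbit of $1$, which contains $1$, $i$ and $j$; this much is correct. But the definition of decomposability only requires $X$ and $Y$ to be unions of $G$-orbits with $1\in X$, and nothing you have written (nor the $D$-loop identity by itself) rules out $\varphi_1$ exchanging a $G$-orbit contained in $X$ with one contained in $Y$ when $G$ has more than two orbits and the partition is not forced. The argument is complete exactly when the partition is determined, e.g.\ when $G$ has precisely two orbits, which is what happens in the paper's worked examples, or if one adds the hypothesis (or makes the choice) that $X$ is closed under the inverse map $\varphi_1$. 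So the gap you flag is real and remains open in your write-up; it is equally present, unacknowledged, in the paper's proof, and to make either argument airtight one should either assume $\varphi_1(X)=X$ or verify it for the decomposition actually used.
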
\begin{proof} Since $Q(\circ)$ is a loop it is sufficient to show that $\psi_1\psi_k\psi_1=\psi_{k^{-1}}^{-1}$ for every $k\in Q$ (Theorem \ref{T2.2}). For $k\not\in\{i,j\}$ we have $\psi_k=\varphi_k$, so for $k\not\in\{i,j\}$ this condition is satisfied by the assumption. For $k=i$ we have
$$
\psi_1\psi_i\psi_1=\varphi_1\bar{\varphi}_i\hat{\varphi}_j\varphi_1=
(\varphi_1\bar{\varphi}_i\varphi_1)(\varphi_1\hat{\varphi}_j\varphi_1)=
\bar{\varphi_j}^{-1}\hat{\varphi_i}^{-1}=\psi_j^{-1}=\psi_{i^{-1}}^{-1}
$$
because $\varphi_1^2=\varepsilon$,\ $\bar{\varphi}_i\hat{\varphi}_j=\hat{\varphi}_j\bar{\varphi}_i$ and $i\cdot j=1.$

For $k=j$ the proof is analogous. So, $Q(\circ)$ is a $D$-loop.
\end{proof}

The assumption $i\cdot j=1$ is essential. Indeed, in Example \ref{Ex-5} tracks $\varphi_3$, $\varphi_4$ are decomposable, $4\cdot 3\ne 1$, $4^{-1}=2$ and $\psi_1\psi_4\psi_1=\psi_4\ne\psi_2^{-1}$.  So, a loop determined by tracks $\psi_1,\ldots,\psi_8$ is not a $D$-loop.

The $D$-loop $Q(\circ)$ constructed in Example \ref{Ex-5} is not isotopic to the initial group $Q(\cdot)$ since  $(7\circ7)\circ 2\ne 7\circ (7\circ 2)$. In this loop we also have $7\circ(7\circ 2)\ne 2$, so it is not an IP-loop, too.

 \begin{example}\label{Ex-5a}The loop $Q(\cdot)$ defined by
{\small{
$$
\begin{array}{c|ccccccccccc}
\cdot&1&2&3&4&5&6&7&8\\ \hline
1&1&2&3&4&5&6&7&8\\
2&2&1&5&7&3&8&4&6\\
3&3&8&4&1&7&2&6&5\\
4&4&6&1&3&8&7&5&2\\
5&5&7&8&2&6&1&3&4\\
6&6&4&7&8&1&5&2&3\\
7&7&5&2&6&4&3&8&1\\
8&8&3&6&5&2&4&1&7
\end{array}$$}}\noindent
is a $ D$-loop with the following tracks:
$$
\begin{array}{lllllll}
\varphi_1=(1.2.34.56.78.)&&\varphi_2=(12.367.485.)&&\varphi_3=(13.4.25768.)\\
\varphi_4=(14.3.27586.)&&\varphi_5=(15.6.23847.)&&\varphi_6=(16.5.28374.)\\
\varphi_7=(17.8.24635.)&&\varphi_8=(18.7.26453.)&&
\end{array}
$$
For $(i,j)\in\{(3,4),(5,6),(7,8)\}$ tracks $\varphi_i,\varphi_j$ are decomposable. Each pair of such tracks gives a $D$-loop. Obtained loops are isotopic  but they are not isotopic to $Q(\cdot)$ since they and $Q(\cdot)$ have different indicators $\Phi^*$. (Isotopic loops have the same indicators -- see \cite{DD}.)

If for the construction of a new loop we use two pairs of decomposable tracks:  $\varphi_3,\varphi_4$ and $\varphi_5,\varphi_6$, or $\varphi_3,\varphi_4$ and $\varphi_7,\varphi_8$, or $\varphi_5,\varphi_6$ and $\varphi_7,\varphi_8$, then we obtain three isotopic $D$-loops. These loops are not isotopic either to $Q(\cdot)$ or to the previous because have different indicators $\Phi^*$.

Also in the case when we use three pairs of decomposable tracks obtained $D$-loop. It is not isotopic to any of the previous.

So, from this $D$-loop we obtain three nonisotopic $D$-loops which also are not isotopic to the initial $D$-loop $Q(\cdot)$.\hfill\qed
\end{example}

\medskip

As it is well known with each quasigroup $Q(\cdot)$ we can conjugate five new quasigroups (called {\it parastrophes} of $Q(\cdot)$) by permuting the variables in the defining equation. Namely, if $Q_0=Q(\cdot)$ is a fixed quasigroup, then its parastrophes have the form
$$
\begin{array}{llcclllccll}
Q(\backslash)&&x\backslash z=y&\Longleftrightarrow& x\cdot y=z,\\[2pt]
Q(/)&&z/y=x&\Longleftrightarrow& x\cdot y=z,\\[2pt]
Q(*)&&y*x=z&\Longleftrightarrow& x\cdot y=z,\\[2pt]
Q(\bullet)&&y\bullet z=x&\Longleftrightarrow& x\cdot y=z,\\[2pt]
Q(\triangleleft)&&z\triangleleft x=y&\Longleftrightarrow& x\cdot y=z.
\end{array}
$$

\begin{theorem} Parastrophes of a $D$-loop $Q(\cdot)$ are isomorphic to one of the following quasigroups: $Q(\cdot)$, $Q(\backslash)$, $Q(/)$.
\end{theorem}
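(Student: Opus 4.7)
The plan is to use the antiautomorphism $J\colon x\mapsto x^{-1}$ of the $D$-loop $Q(\cdot)$ to pair up the six parastrophes into three orbits, each containing exactly one of $Q(\cdot)$, $Q(\backslash)$, $Q(/)$. By Corollary~\ref{C1.4}, $J$ is an involutive bijection of $Q$, and by the very definition of a $D$-loop it satisfies $J(x\cdot y)=y^{-1}\cdot x^{-1}=J(y)\cdot J(x)$. This antiautomorphic identity is the only tool I expect to need.

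First I would verify that $J$ is an isomorphism $Q(\cdot)\to Q(*)$: since $u*v=v\cdot u$, one has $J(x)*J(y)=y^{-1}\cdot x^{-1}=J(x\cdot y)$. Next, for $Q(\backslash)\to Q(\bullet)$, I would write $y=x\backslash z$, i.e.\ $x\cdot y=z$, apply $J$ to get $y^{-1}\cdot x^{-1}=z^{-1}$, and then read this off against $u\bullet w=v\Leftrightarrow v\cdot u=w$, which gives $J(x)\bullet J(z)=J(y)=J(x\backslash z)$. Finally, for $Q(/)\to Q(\triangleleft)$, I would set $x=z/y$, so $x\cdot y=z$; applying $J$ again yields $y^{-1}\cdot x^{-1}=z^{-1}$, and reading this via $u\triangleleft v=w\Leftrightarrow v\cdot w=u$ produces $J(z)\triangleleft J(y)=J(x)=J(z/y)$.

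These three isomorphisms together partition the six parastrophes into the $J$-orbits $\{Q(\cdot),Q(*)\}$, $\{Q(\backslash),Q(\bullet)\}$, $\{Q(/),Q(\triangleleft)\}$, and each orbit contains one of the representatives $Q(\cdot)$, $Q(\backslash)$, $Q(/)$, which proves the theorem. The main (and essentially only) difficulty is bookkeeping: one must identify unambiguously which parastrophe operation is encoded by each rewritten equality $a\cdot b=c$, matching carefully against the five defining equivalences listed just before the theorem. Once these matchings are correctly made, no further loop-theoretic input beyond the $D$-loop identity is required.
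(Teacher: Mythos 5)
Your proposal is correct and follows essentially the same route as the paper: the map $J(x)=x^{-1}$ is exactly the track $\varphi_1$ used there, and the same antiautomorphic identity yields the same three pairings $Q(\cdot)\cong Q(*)$, $Q(\backslash)\cong Q(\bullet)$, $Q(/)\cong Q(\triangleleft)$. The only difference is cosmetic bookkeeping in how the defining equivalences are matched.
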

\begin{proof}
Indeed, if $Q(\cdot)$ is a $D$-loop, $\varphi_1$ -- its track determined by the identity of $Q(\cdot)$, then, according to the definition of $D$-loops, we have
$$\varphi_1(x\cdot y)=\varphi_1(y)\cdot\varphi_1(x).
$$
Hence
$$
\varphi_1(y*x)=\varphi_1(z)\Longleftrightarrow \varphi_1(x\cdot y)=\varphi_1(z)
\Longleftrightarrow \varphi_1(y)\cdot\varphi_1(x)=\varphi_1(z).
$$
So, $\varphi_1(y*x)=\varphi_1(y)\cdot\varphi_1(x),$ i.e., $Q(*)$ and $Q(\cdot)$ are isomorphic.

\medskip
Further,
$$\begin{array}{rlll}
\varphi_1(y\bullet z)=\varphi_1(x)&\Longleftrightarrow \varphi_1(x\cdot y)=\varphi_1(z)
\Longleftrightarrow \varphi_1(y)\cdot\varphi_1(x)=\varphi_1(z)\\[4pt]
&\Longleftrightarrow \varphi_1(y)\backslash\varphi_1(z)=\varphi_1(x).
\end{array}$$
Thus, $\varphi_1(y\bullet z)=\varphi_1(y\backslash z)$. Consequently, $Q(\bullet)\cong Q(\backslash)$.

\medskip

Analogously,
$$\begin{array}{rlll}
\varphi_1(z\triangleleft x)=\varphi_1(y)&\Longleftrightarrow \varphi_1(x\cdot y)=\varphi_1(z)
\Longleftrightarrow \varphi_1(y)\cdot\varphi_1(x)=\varphi_1(z)\\[4pt]
&\Longleftrightarrow \varphi_1(z)/\varphi_1(x)=\varphi_1(y),
\end{array}$$
whence $\varphi_1(z\triangleleft x)=\varphi_1(z/x)$. So, $Q(\triangleleft)\cong Q(/)$.
\end{proof}

\section*{\centerline{3. Loops isotopic to D-loops}}\setcounter{section}{3}\setcounter{theorem}{0}

As was mentioned earlier, loops isotopic to $D$-loops are not $D$-loops in general, but in some cases principal isotopes of $D$-loops are $D$-loops. Below we find conditions under which $D$-loops are isotopic to groups and conditions under which a principal isotope of a $D$-loop is a $D$-loop.
\begin{definition}
Let $Q(\cdot)$, where $Q=\{1,2,\ldots,n\}$, be a quasigroup. By a
{\it spin} of a quasigroup $Q(\cdot)$ we mean a permutation
$$
\varphi_{ij}=\varphi_i\varphi_j^{-1}=\varphi_i\lambda_j\, ,
$$
where $\varphi_i$ and $\lambda_j$ are right and left tracks of
$Q(\cdot)$ respectively.
\end{definition}

Obviously $\varphi_{ii}=\varepsilon$ for $i\in Q$ and
$\varphi_{ij}\ne\varphi_{ik}$ for $j\ne k$, but the situation
where $\varphi_{ij}=\varphi_{kl}$ for some $i,j,k,l\in Q$ also is
possible (cf. \cite{D'08}). Hence the collection $\Phi$ of all
spins of a given quasigroup $Q(\cdot)$ can be divided into
disjoint subsets $\Phi_i=\{\varphi_{ij}:j\in Q\}$ (called {\it
spin-basis}) in which all elements are different. Generally,
$\Phi_i$ are not closed under the composition of permutations but
in some cases $\Phi_i$ are groups.

In \cite{D'08} the following result is proved.

\begin{theorem}\label{T3.2}
A quasigroup $Q(\cdot)$ is isotopic to some group if and only if
its spin-basis $\Phi_1$ is a group.\hfill$\Box{}$
\end{theorem}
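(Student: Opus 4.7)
The plan is to prove each direction separately, with the forward direction being a direct computation and the reverse requiring the reconstruction of an isotopy from purely abstract data.

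For the forward direction, suppose $Q(\cdot)$ is isotopic to a group $(G,+)$, so there exist bijections $\alpha,\beta,\gamma\colon Q\to G$ with $\gamma(x\cdot y)=\alpha(x)+\beta(y)$. Solving $x\cdot\varphi_j(x)=j$ yields $\varphi_j(x)=\beta^{-1}(-\alpha(x)+\gamma(j))$ and hence $\varphi_j^{-1}(y)=\alpha^{-1}(\gamma(j)-\beta(y))$. A direct computation then gives
$$\varphi_{1j}(y)=\varphi_1\varphi_j^{-1}(y)=\beta^{-1}\bigl(\beta(y)-\gamma(j)+\gamma(1)\bigr),$$
so each $\varphi_{1j}$ is the $\beta$-conjugate of the right translation of $(G,+)$ by the element $g_j:=-\gamma(j)+\gamma(1)$. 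Since $j\mapsto g_j$ is a bijection $Q\to G$, the set $\Phi_1$ coincides with the $\beta$-conjugate of the full right regular representation of $G$, and is therefore a group.

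For the reverse direction, assume $\Phi_1$ is a group under composition. The map $\theta\colon Q\to\Phi_1$, $\theta(j)=\varphi_{1j}$, is a bijection, so we transport the group structure to $Q$ by declaring $j\ast k$ to be the unique element with $\varphi_{1,\,j\ast k}=\varphi_{1j}\varphi_{1k}$. Then $(Q,\ast)$ is a group with identity $1$ (since $\varphi_{11}=\varepsilon$), and $\theta$ is a group isomorphism. The central observation is that the action of $\Phi_1$ on $Q$ is regular. Transitivity: given $y,z\in Q$, choose $u\in Q$ with $u\cdot z=1$ and set $j=u\cdot y$; then $\varphi_j^{-1}(y)=u=\varphi_1^{-1}(z)$, so $\varphi_{1j}(y)=z$. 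Freeness: if $\varphi_{1j}(y)=y$, putting $u=\varphi_j^{-1}(y)$ gives both $u\cdot y=j$ and $\varphi_1(u)=y$, i.e.\ $u\cdot y=1$, whence $j=1$.

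Because $\Phi_1$ acts regularly, fixing the base point $1\in Q$ yields a bijection $\tau\colon Q\to\Phi_1$ with $\tau(y)(1)=y$, which is automatically equivariant: $\tau(h(y))=h\circ\tau(y)$ for every $h\in\Phi_1$. Rewriting the tautology $\varphi_1(x)=\varphi_{1,\,x\cdot y}(y)$ via $\tau$ produces
$$\varphi_{1,\,x\cdot y}=\tau(\varphi_1(x))\circ\tau(y)^{-1}.$$
Applying the group isomorphism $\theta^{-1}\colon(\Phi_1,\circ)\to(Q,\ast)$ and setting $\alpha=\theta^{-1}\tau\varphi_1$ and $\beta(y)=(\theta^{-1}\tau(y))^{-1}$ (inverse in $(Q,\ast)$), we obtain $x\cdot y=\alpha(x)\ast\beta(y)$, exhibiting $(Q,\cdot)$ as a principal isotope of the group $(Q,\ast)$. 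The main obstacle is the reverse direction, and within it the regularity of the $\Phi_1$-action on $Q$: once freeness and transitivity are secured from the relation $\varphi_{1j}=\varphi_1\varphi_j^{-1}$ together with the quasigroup laws, the identification of $Q$ with $\Phi_1$ via a base point and the extraction of the isotopy $(\alpha,\beta,\varepsilon)$ are essentially formal.
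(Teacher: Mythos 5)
Your proof is correct. Note that the paper itself gives no argument for Theorem \ref{T3.2}: it is imported verbatim from \cite{D'08} with the remark ``In \cite{D'08} the following result is proved,'' so there is no internal proof to compare against. On its own merits your argument is sound in both directions. The forward computation correctly identifies $\varphi_{1j}=\beta^{-1}R^{+}_{g_j}\beta$ with $g_j=-\gamma(j)+\gamma(1)$ (the signs work out even for non-abelian $G$), so $\Phi_1$ is a conjugate of the right regular representation and hence a group. For the converse, the two key facts you isolate --- that $\Phi_1$ acts transitively (via $u\cdot z=1$, $j=u\cdot y$) and freely (a fixed point of $\varphi_{1j}$ forces $j=1$) --- are exactly what is needed, and together with $|\Phi_1|=|Q|$ they give regularity; the identity $\varphi_{1,\,x\cdot y}(y)=\varphi_1(x)$ then transports to $x\cdot y=\alpha(x)\ast\beta(y)$ with $(Q,\ast)$ the group structure pulled back along $\theta$. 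This is essentially an Albert-style ``regular permutation group'' characterization of group isotopes, recast in the language of spins, and it is self-contained where the paper merely cites a reference.
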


In this case $\Phi_1=\Phi_i$ for all $i\in Q$.

\begin{theorem}\label{T3.3}
In $D$-loops we have $\Phi=\langle\Phi_1\rangle
=\{\varphi_{1i}\varphi_{1j}:i,j\in Q\}$.
\end{theorem}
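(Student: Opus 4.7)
The plan is to extract everything from the single identity of Corollary~2.3(a), namely $\varphi_1\varphi_a^{-1}\varphi_1 = \varphi_{a^{-1}}$, together with the involution property $\varphi_1^2 = \varepsilon$ which follows from Corollary~1.4 (since $\varphi_1(x) = x_R^{-1}$ and $(a^{-1})^{-1} = a$ in a $D$-loop).

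First I would unfold the product $\varphi_{1i}\varphi_{1j}$ by a direct computation. Using $\varphi_{1k} = \varphi_1\varphi_k^{-1}$, I obtain
\[
\varphi_{1i}\varphi_{1j} \;=\; \varphi_1\varphi_i^{-1}\cdot\varphi_1\varphi_j^{-1} \;=\; (\varphi_1\varphi_i^{-1}\varphi_1)\,\varphi_j^{-1} \;=\; \varphi_{i^{-1}}\varphi_j^{-1} \;=\; \varphi_{i^{-1},j},
\]
which is a spin. This gives $\{\varphi_{1i}\varphi_{1j}\}\subseteq\Phi$ and, trivially, $\{\varphi_{1i}\varphi_{1j}\}\subseteq\langle\Phi_1\rangle$.

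For the reverse inclusion $\Phi\subseteq\{\varphi_{1i}\varphi_{1j}\}$, I would apply the same computation with $i$ replaced by $i^{-1}$. Invoking $(i^{-1})^{-1} = i$ from Corollary~1.4, an arbitrary spin becomes a product of two elements of $\Phi_1$:
\[
\varphi_{1,\,i^{-1}}\,\varphi_{1j} \;=\; \varphi_{(i^{-1})^{-1}}\varphi_j^{-1} \;=\; \varphi_i\varphi_j^{-1} \;=\; \varphi_{ij}.
\]
Combining the two inclusions yields $\Phi=\{\varphi_{1i}\varphi_{1j}\}$; noting in addition that $\Phi_1\subseteq\{\varphi_{1i}\varphi_{1j}\}$ (take $j=1$ and use $\varphi_{11}=\varepsilon$), the remaining equality with $\langle\Phi_1\rangle$ follows by sandwiching.

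The entire proof is quite short; the substantive content is the identity $\varphi_1\varphi_a^{-1}\varphi_1 = \varphi_{a^{-1}}$, which is precisely where the $D$-loop hypothesis is used. The only subtlety I would watch for is the role of Corollary~1.4: without the involution property $(i^{-1})^{-1}=i$, the substitution $i\mapsto i^{-1}$ would not parameterize all indices and the bijection between the pairs $(i,j)$ and the spins $(i^{-1},j)$ that drives the second inclusion would fail. I do not foresee a genuine obstacle beyond correctly bookkeeping this index change.
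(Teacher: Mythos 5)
Your proof is correct and follows essentially the same route as the paper: both use Corollary 2.3(a) to compute $\varphi_{1i}\varphi_{1j}=(\varphi_1\varphi_i^{-1}\varphi_1)\varphi_j^{-1}=\varphi_{i^{-1}j}\in\Phi$, and both obtain the reverse inclusion by writing $\varphi_{ij}=\varphi_{1i^{-1}}\varphi_{1j}$, which rests on $(i^{-1})^{-1}=i$ from Corollary 1.4 exactly as you note. Your explicit attention to that involution and to the inclusion $\Phi_1\subseteq\{\varphi_{1i}\varphi_{1j}\}$ only makes the argument slightly more careful than the paper's terse version.
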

\begin{proof}
Indeed, by Corollary \ref{C2.3}
$$
\varphi_{1i}\varphi_{1j}=\varphi_1\varphi_i^{-1}\varphi_1\varphi_j^{-1}=
(\varphi_1\varphi_i^{-1}\varphi_1)\varphi_j^{-1}=\varphi_{i^{-1}}\varphi_j^{-1}=\varphi_{i^{-1}j}\in
\Phi
$$
and conversely, each $\varphi_{ij}\in \Phi$ can be written
in the form $\varphi_{ij} =\varphi_{1i^{-1}}\varphi_{1j}$.
\end{proof}

\begin{corollary}\label{C3.4}
A $D$-loop is isotopic to a group if and only if
$\langle\Phi_1\rangle=\Phi_1$.
\end{corollary}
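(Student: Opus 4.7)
The plan is to deduce this corollary by combining Theorem~\ref{T3.2} with Theorem~\ref{T3.3}, exploiting a purely set-theoretic observation about generated subgroups.

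First I would note the elementary fact that for any subset $S$ of a symmetric group, the equality $\langle S\rangle = S$ holds if and only if $S$ is itself a subgroup; indeed $S\subseteq\langle S\rangle$ always, and $\langle S\rangle$ is by definition the smallest subgroup containing $S$. Applying this with $S=\Phi_1$, the hypothesis $\langle\Phi_1\rangle=\Phi_1$ is just a restatement of ``$\Phi_1$ is a group''.

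Next I would invoke Theorem~\ref{T3.2}, which characterises quasigroups isotopic to a group by precisely this condition that $\Phi_1$ is a group. Since a $D$-loop is in particular a quasigroup, Theorem~\ref{T3.2} applies and yields: $Q(\cdot)$ is isotopic to a group iff $\Phi_1$ is a group iff $\langle\Phi_1\rangle=\Phi_1$. This is the claimed equivalence.

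There is essentially no obstacle — the corollary is a direct reformulation. The role of Theorem~\ref{T3.3} is to make this reformulation natural and informative in the $D$-loop setting: since in a $D$-loop one has $\Phi=\langle\Phi_1\rangle$ automatically, the condition $\langle\Phi_1\rangle=\Phi_1$ is equivalent to $\Phi=\Phi_1$, i.e.\ to the collapse of the partition $\{\Phi_i\}$ into a single spin-basis, which is exactly what one expects from the remark following Theorem~\ref{T3.2} that $\Phi_1=\Phi_i$ for all $i\in Q$ whenever $Q(\cdot)$ is isotopic to a group.
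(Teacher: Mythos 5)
Your argument hinges on reading $\langle\Phi_1\rangle$ as the subgroup of the symmetric group generated by $\Phi_1$, in which case $\langle\Phi_1\rangle=\Phi_1$ is indeed just a paraphrase of ``$\Phi_1$ is a group'' and the corollary collapses into Theorem~\ref{T3.2}. But that is not what the symbol means here: Theorem~\ref{T3.3} identifies $\langle\Phi_1\rangle$ with the set $\{\varphi_{1i}\varphi_{1j}:i,j\in Q\}$ of \emph{two-fold products} of spins from $\Phi_1$, and shows that this set equals $\Phi$; nothing in that theorem or its proof shows the set to be closed under composition or inversion, so it is not a priori a subgroup. Under the intended reading, the hypothesis $\langle\Phi_1\rangle=\Phi_1$ says only that the product of any two elements of $\Phi_1$ lands back in $\Phi_1$, i.e., that $\Phi_1$ is closed under composition. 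If your reading were the right one, the statement would hold for arbitrary quasigroups and the $D$-loop hypothesis (together with Theorem~\ref{T3.3}) would be superfluous --- a sign that the actual content has been lost.

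The genuine gap is therefore the passage from ``$\Phi_1$ is closed under composition'' to ``$\Phi_1$ is a group'': you must still produce inverses. The paper does this with the spin identity $\varphi_{1i}\varphi_{i1}=\varphi_1\varphi_i^{-1}\varphi_i\varphi_1^{-1}=\varphi_{11}=\varepsilon$, which gives $\varphi_{1i}^{-1}=\varphi_{i1}\in\Phi=\langle\Phi_1\rangle=\Phi_1$, using Theorem~\ref{T3.3} and the hypothesis; together with $\varepsilon=\varphi_{11}\in\Phi_1$ this makes $\Phi_1$ a group, and Theorem~\ref{T3.2} then yields the isotopy to a group. (Since $Q$ is finite one could instead observe that a finite set of permutations containing $\varepsilon$ and closed under composition is automatically a group, but some such step is required, and your proposal supplies none.) The forward direction of your argument is fine and matches the paper.
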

\begin{proof}
If a $D$-loop $Q(\cdot)$ is isotopic to a group, then, by Theorem
\ref{T3.2}, $\Phi_1$ is a group. Hence $\langle\Phi_1\rangle=\Phi_1$.

Conversely, if $\langle\Phi_1\rangle=\Phi_1$, then
$\varphi_{1i}\varphi_{i1}=\varphi_{11}$ implies
$\varphi_{1i}^{-1}=\varphi_{i1}\in
\Phi=\langle\Phi_1\rangle=\Phi_1$ which means that $\Phi_1$ is a
group. Thus $Q(\circ)$ is isotopic to some group.
\end{proof}

\begin{corollary}\label{C3.5}
A $D$-loop is isotopic to a group if and only if
$\Phi_1$ is closed under a composition of permutations.
\end{corollary}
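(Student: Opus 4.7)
The plan is to reduce the statement to Corollary \ref{C3.4}, which already characterises isotopy to a group by the condition $\langle\Phi_1\rangle=\Phi_1$. So it suffices to show that, for a $D$-loop, the generated set $\langle\Phi_1\rangle$ coincides with $\Phi_1$ precisely when $\Phi_1$ is closed under composition of permutations.

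One direction is immediate: if $\langle\Phi_1\rangle=\Phi_1$, then $\Phi_1$ is (in particular) closed under composition, because products of elements of $\Phi_1$ lie in $\langle\Phi_1\rangle=\Phi_1$. The main work is therefore the converse. Suppose $\Phi_1$ is closed under composition. First I would record that $\varepsilon=\varphi_{11}=\varphi_1\varphi_1^{-1}\in\Phi_1$, so $\Phi_1$ is a nonempty subset of the symmetric group on $Q$ which contains the identity and is closed under the group operation. Since $Q$ is finite, $\Phi_1$ is a finite subsemigroup of a group containing the identity; a standard argument (each element $\psi\in\Phi_1$ has finite order $k$, so $\psi^{-1}=\psi^{k-1}\in\Phi_1$) shows that $\Phi_1$ is a subgroup. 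Hence $\langle\Phi_1\rangle=\Phi_1$, and Corollary \ref{C3.4} gives isotopy to a group.

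There is no real obstacle: the nontrivial content of the equivalence is already packaged in Corollary \ref{C3.4} (which in turn rests on Theorem \ref{T3.3} and Theorem \ref{T3.2}). The only subtlety worth flagging explicitly is the appeal to finiteness of $Q$ to pass from closure-plus-identity to subgroup, and the verification that $\varepsilon\in\Phi_1$, both of which are routine.
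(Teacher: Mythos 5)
Your proof is correct, and its skeleton matches the paper's: both directions ultimately funnel through the fact that closure of $\Phi_1$ under composition forces $\Phi_1$ to be a group, after which Theorem \ref{T3.2} (equivalently, Corollary \ref{C3.4}) finishes the job. The one step where you genuinely diverge is how you supply inverses. The paper exhibits the inverse of $\varphi_{1i}$ explicitly as the spin $\varphi_{i1}$, using the identity $\varphi_{1i}\varphi_{i1}=\varphi_{11}=\varepsilon$, and then invokes Theorem \ref{T3.3} (which is where the $D$-loop hypothesis enters) to conclude $\varphi_{i1}\in\Phi=\langle\Phi_1\rangle=\Phi_1$. You instead use the generic fact that a finite subsemigroup of a group containing the identity is a subgroup, via $\psi^{-1}=\psi^{k-1}$. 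Your route is more elementary and, notably, does not use the $D$-loop structure at all in the converse direction (only $\varepsilon=\varphi_{11}\in\Phi_1$ and Theorem \ref{T3.2}, both of which hold for arbitrary quasigroups), so it shows the converse implication is valid for any finite quasigroup; the price is an explicit appeal to finiteness of $Q$, which you correctly flag. The paper's route buys an explicit description of the inverse as another spin, which is in keeping with its computational emphasis, but leans on Theorem \ref{T3.3}. Both are complete; no gap.
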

\begin{proof}
If a $D$-loop $Q(\cdot)$ is isotopic to a group, then, by Theorem
\ref{T3.2}, $\Phi_1$ is a group. Hence $\Phi_1$ is closed under
a composition of permutations.

Conversely, if $\Phi_1$ is closed under a composition of
permutations, then, in view of Theorem \ref{T3.3}, from
$\varphi_{1i}\varphi_{i1}=\varphi_{11}$ it follows
$\varphi_{1i}^{-1}=\varphi_{i1}\in
\Phi=\langle\Phi_1\rangle=\Phi_1$, which means that $\Phi_1$ is a
group. Thus $Q(\cdot)$ is isotopic to some group.
\end{proof}

\begin{corollary}\label{C3.6}
A $D$-loop $Q(\cdot)$  is isotopic to a group if and only if for
all $i,j\in Q$ there exists $k\in Q$ such that $\varphi_i\varphi_1\varphi_j=\varphi_k$.
\end{corollary}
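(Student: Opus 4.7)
The plan is to deduce Corollary \ref{C3.6} directly from Corollary \ref{C3.5} by showing that the stated condition is merely a reformulation of the closure of $\Phi_1$ under composition. The key algebraic fact we shall exploit is that in a $D$-loop we have $\varphi_1^2=\varepsilon$, which follows from $(a^{-1})^{-1}=a$ (Corollary \ref{C1.4}); this lets us freely conjugate by $\varphi_1$ without bookkeeping on its inverse.

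First I would unfold what $\Phi_1$ being closed means. An element of $\Phi_1$ has the form $\varphi_{1m}=\varphi_1\varphi_m^{-1}$, so closure reads: for every $i,j\in Q$ there exists $m\in Q$ with
\[
\varphi_{1i}\varphi_{1j}=\varphi_1\varphi_i^{-1}\varphi_1\varphi_j^{-1}=\varphi_1\varphi_m^{-1}.
\]
Cancelling $\varphi_1$ on the left (it is a permutation, hence invertible) and taking inverses of both sides yields $\varphi_j\varphi_1\varphi_i=\varphi_m$. Running the equivalences backwards (again using $\varphi_1^2=\varepsilon$) shows that these two conditions are equivalent. Relabeling $(j,i)\mapsto(i,j)$ and $m\mapsto k$ gives exactly the statement $\varphi_i\varphi_1\varphi_j=\varphi_k$.

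Combining this equivalence with Corollary \ref{C3.5}, which says that a $D$-loop is isotopic to a group precisely when $\Phi_1$ is closed under composition, finishes the proof in both directions.

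I do not expect a genuine obstacle here; the corollary is essentially a restatement of Corollary \ref{C3.5} through the lens of Theorem \ref{T3.3}. The only place where care is needed is to make sure the $\Phi_1$-closure condition is translated to the $\varphi_i\varphi_1\varphi_j$ form with the correct indices, which is why I would spell out the chain of equivalences explicitly and invoke $\varphi_1^{-1}=\varphi_1$ at the right moment.
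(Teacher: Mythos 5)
Your proposal is correct and follows essentially the same route as the paper: both reduce the statement to the closure of $\Phi_1$ under composition (Corollary \ref{C3.5}), cancel $\varphi_1$ on the left of $\varphi_1\varphi_i^{-1}\varphi_1\varphi_j^{-1}=\varphi_1\varphi_k^{-1}$, and take inverses using $\varphi_1^{-1}=\varphi_1$. The only cosmetic difference is the order of indices before the final relabeling, which you handle correctly.
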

\begin{proof}Indeed, $\varphi_{1j}\varphi_{1i}=\varphi_{1k}$ means that
$\varphi_1\varphi_j^{-1}\varphi_1\varphi_i^{-1}=\varphi_1\varphi_k^{-1}$.
Thus $\varphi_j^{-1}\varphi_1\varphi_i^{-1}=\varphi_k^{-1}$. Hence
$\varphi_k=\varphi_i\varphi_1\varphi_j$.
\end{proof}

\begin{theorem}\label{T3.7}
If a quasigroup $Q(\cdot)$ is isotopic to a $D$-loop $Q(\circ)$,
then there exists a permutation $\sigma$ of $Q$ and an element
$p\in Q$ and such that
\begin{equation}\label{e14}
\varphi_p\varphi_i^{-1}\varphi_p=\varphi_{\sigma(i)}
\end{equation}
for all tracks $\varphi_i$ of $Q(\cdot)$.
\end{theorem}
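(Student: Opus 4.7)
The plan is to pull the characterisation of $D$-loops in Corollary~\ref{C2.3}(a), applied to $Q(\circ)$, back to a statement about the tracks of $Q(\cdot)$ via the isotopy. By hypothesis there exist permutations $\alpha,\beta,\gamma$ of $Q$ with $\gamma(x\circ y)=\alpha(x)\cdot\beta(y)$. Substituting $y=\psi_k(x)$, where $\psi_k$ denotes the track of $Q(\circ)$, turns the defining relation $x\circ\psi_k(x)=k$ into $\alpha(x)\cdot\beta(\psi_k(x))=\gamma(k)$. Setting $u=\alpha(x)$ and comparing with $u\cdot\varphi_{\gamma(k)}(u)=\gamma(k)$ in $Q(\cdot)$ yields the transformation rule
$$\psi_k=\beta^{-1}\varphi_{\gamma(k)}\alpha.$$

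Let $e$ denote the identity of $Q(\circ)$. Since $Q(\circ)$ is a $D$-loop, Corollary~\ref{C2.3}(a) applied to it gives
$$\psi_e\,\psi_k^{-1}\,\psi_e=\psi_{k^{-1}}$$
for every $k\in Q$, where $k^{-1}$ is the two-sided loop-inverse of $k$ in $Q(\circ)$ (which exists by Corollary~\ref{C1.4}). I would substitute the transformation rule into this identity. The inner pairs $\alpha\alpha^{-1}$ and $\beta\beta^{-1}$ cancel telescopically, and the remaining outer $\beta^{-1}$ on the left and $\alpha$ on the right can be stripped from both sides, reducing the identity to
$$\varphi_{\gamma(e)}\,\varphi_{\gamma(k)}^{-1}\,\varphi_{\gamma(e)}=\varphi_{\gamma(k^{-1})}.$$

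Finally, I would set $p:=\gamma(e)$ and, writing $i=\gamma(k)$, define $\sigma(i):=\gamma\bigl((\gamma^{-1}(i))^{-1}\bigr)$, where the inner inversion is taken inside $Q(\circ)$. This gives exactly $\varphi_p\,\varphi_i^{-1}\,\varphi_p=\varphi_{\sigma(i)}$ for every $i\in Q$, and $\sigma$ is a permutation of $Q$ because it is the composition of three bijections: $\gamma^{-1}$, loop-inversion in $Q(\circ)$ (a bijection by Corollary~\ref{C1.4}), and $\gamma$. I do not anticipate any serious obstacle; the only point that needs care is writing the isotopy in a form that makes the track transformation rule cancel cleanly when substituted into the $D$-loop identity, and verifying that the constructed $\sigma$ is well-defined as a permutation, both of which follow directly from what is already established in the paper.
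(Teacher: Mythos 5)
Your proposal is correct and follows essentially the same route as the paper: derive the transformation rule relating the tracks of $Q(\cdot)$ and $Q(\circ)$ from the isotopy, apply the $D$-loop characterization of Corollary~\ref{C2.3}(a) to $Q(\circ)$, and conjugate back, taking $p$ to be the preimage of the identity and $\sigma$ the conjugate of loop-inversion by the isotopy component $\gamma$. The only difference is that you write the isotopy in the opposite direction from the paper's equation \eqref{e15}, which merely replaces $\alpha,\beta,\gamma$ by their inverses and turns the paper's $\sigma=\gamma^{-1}\psi_1\gamma$ into your $\gamma\,(\cdot)^{-1}\gamma^{-1}$.
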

\begin{proof}
Let a quasigroup $Q(\cdot)$ be isotopic to a $D$-loop $Q(\circ)$.
Then
\begin{equation}\label{e15}
\gamma(x\cdot y)=\alpha(x)\circ\beta(y)
 \end{equation}
for some permutations $\alpha,\beta,\gamma$ of $Q$. Thus for all
$i,x\in Q$ we have
$$
\gamma(i)=\gamma(x\cdot\varphi_i(x))=\alpha(x)\circ\beta\varphi_i(x),
$$
where $\varphi_i$ is a right track of $Q(\cdot)$. Hence
$$
\gamma(i)=z\circ\beta\varphi_i\alpha^{-1}(z)
$$
for all $i\in Q$ and $z=\alpha(x)$. This together with
$\gamma(i)=z\circ\psi_{\gamma(i)}(z)$ gives
$$
\beta\varphi_i\alpha^{-1}=\psi_{\gamma(i)},
$$
i.e.,
\begin{equation}\label{e16}
\varphi_i=\beta^{-1}\psi_{\gamma(i)}\alpha, \ \ \ \ \ \
\varphi_i^{-1}=\alpha^{-1}\psi_{\gamma(i)}^{-1}\beta.
\end{equation}
Thus for $p=\gamma^{-1}(1)$, where $1$ is the identity of $Q(\circ)$, we obtain
$$
\varphi_p\varphi_i^{-1}\varphi_p=
(\beta^{-1}\psi_1\alpha)(\alpha^{-1}\psi_{\gamma(i)}^{-1}\beta)(\beta^{-1}\psi_1\alpha)=
\beta^{-1}(\psi_1\psi_{\gamma(i)}^{-1}\psi_1)\alpha .
$$
Since $Q(\circ)$ is a $D$-loop, for $k=\gamma^{-1}\psi_1\gamma(i)$, by Corollary \ref{C2.3}, we have
$$
\beta^{-1}(\psi_1\psi_{\gamma(i)}^{-1}\psi_1)\alpha=\beta^{-1}\psi_{\gamma(i)^{-1}}\alpha=\beta^{-1}\psi_{\psi_1\gamma(i)}\alpha=\beta^{-1}\psi_{\gamma(k)}\alpha= \varphi_{k} .
$$
So, $\varphi_p\varphi_i^{-1}\varphi_p=\varphi_k$, which means that \eqref{e14}
is valid for $\sigma=\gamma^{-1}\psi_1\gamma$.
\end{proof}

The converse statement is more complicated.

\begin{theorem}\label{T3.8}
Let a quasigroup $Q(\cdot)$ and a loop $Q(\circ)$ with the identity $1$ be isotopic, i.e., let \eqref{e15} holds. If $\varphi_i$ are tracks of $Q(\cdot)$, $\psi_i$ -- tracks of $Q(\circ)$ and \eqref{e14} is satisfied for $p=\gamma^{-1}(1)$,\ $\sigma=\gamma^{-1}\psi_1\gamma$ and all $i\in Q,$ then $Q(\circ)$ is a $D$-loop.
\end{theorem}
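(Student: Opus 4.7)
The plan is to translate the hypothesis \eqref{e14} into an identity among the right tracks $\psi_k$ of $Q(\circ)$ using the isotopy \eqref{e15}, then extract from it the involution identity $\psi_1^2=\varepsilon$, and finally read off the $D$-loop criterion of Corollary~\ref{C2.3}(a).

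First, exactly as in the proof of Theorem~\ref{T3.7}, the isotopy \eqref{e15} already forces the relations \eqref{e16} (that derivation never used that $Q(\circ)$ is a $D$-loop); in particular $\varphi_p=\beta^{-1}\psi_1\alpha$ because $\gamma(p)=1$. Substituting \eqref{e16} into \eqref{e14} makes the factors $\alpha\alpha^{-1}$ and $\beta\beta^{-1}$ collapse, while on the right-hand side $\gamma\sigma=\psi_1\gamma$. Cancelling the outer $\beta^{-1}$ and $\alpha$ and setting $k=\gamma(i)$ yields
\[
(\ast)\qquad\psi_1\psi_k^{-1}\psi_1=\psi_{\psi_1(k)}\qquad\text{for every }k\in Q.
\]

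The main step is to deduce $\psi_1^2=\varepsilon$ from $(\ast)$. Applying $(\ast)$ with $\psi_1(k)$ in place of $k$ gives $\psi_1\psi_{\psi_1(k)}^{-1}\psi_1=\psi_{\psi_1^2(k)}$; on the other hand, inverting $(\ast)$ itself yields $\psi_{\psi_1(k)}^{-1}=\psi_1^{-1}\psi_k\psi_1^{-1}$, and substituting this into the previous equation makes both outer $\psi_1$'s cancel on the left, leaving $\psi_k=\psi_{\psi_1^2(k)}$. Since the assignment $a\mapsto\psi_a$ is injective (indeed $\psi_a(1)=a$), this forces $\psi_1^2=\varepsilon$. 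Now $\psi_1(k)=k_R^{-1}$, so $\psi_1^2=\varepsilon$ gives $k_R^{-1}\circ k=1$, and by uniqueness of loop-inverses $k_L^{-1}=k_R^{-1}=k^{-1}$; in particular $\psi_1(k)=k^{-1}$, and $(\ast)$ reads $\psi_1\psi_k^{-1}\psi_1=\psi_{k^{-1}}$, which is exactly Corollary~\ref{C2.3}(a). Therefore $Q(\circ)$ is a $D$-loop. The only genuinely non-routine move is the self-iteration of $(\ast)$ that forces $\psi_1^2=\varepsilon$; everything else is a direct substitution from \eqref{e16}.
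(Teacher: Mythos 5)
Your proof is correct and follows essentially the same route as the paper: conjugate \eqref{e14} by the relations \eqref{e16} coming from the isotopy to get $\psi_1\psi_k^{-1}\psi_1=\psi_{\psi_1(k)}$, then read off condition $(a)$ of Corollary~\ref{C2.3}. The only difference is that you explicitly verify $\psi_1(k)=k^{-1}$ by extracting $\psi_1^{2}=\varepsilon$ from self-iteration of the identity, a step the paper silently takes for granted when it writes $\psi_{\psi_1(i)}=\psi_{i^{-1}}$.
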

\begin{proof}
Indeed, \eqref{e15} holds, then for $p=\gamma^{-1}(1)$ and any $i\in Q$, in view of \eqref{e16}, we have
$$
\arraycolsep=.5mm\begin{array}{rll}
\psi_1\psi_i^{-1}\psi_1&=
(\beta\varphi_{\gamma^{-1}(1)}\alpha^{-1})(\alpha\varphi_{\gamma^{-1}(i)}^{-1}\beta^{-1})(\beta\varphi_{\gamma^{-1}(1)}\alpha^{-1})
=\beta(\varphi_p\varphi_{\gamma^{-1}(i)}^{-1}\varphi_p)\alpha^{-1}\\[4pt]
&= \beta\varphi_{\sigma(\gamma(i))}\alpha^{-1} =
\beta\varphi_{\gamma^{-1}\varphi_1(i)}\alpha^{-1}=
\psi_{\psi_1(i)}=\psi_{i^{-1}} ,
\end{array}
$$
where $i^{-1}$ is calculated in $Q(\circ)$.

Thus $ \psi_1\psi_i^{-1}\psi_1=\psi_{i^{-1}}$, which
means that $Q(\circ)$ is a D-loop.
\end{proof}

\begin{lemma}\label{L3.9}
A loop $Q(\circ)$ is a principal isotope of a quasigroup
$Q(\cdot)$ if and only if
$$
x\circ y=R_b^{-1}(x)\cdot L_a^{-1}(y),
$$
for some $a,b\in Q$ such that $a\cdot b=1$, where $1$ is the
identity of $Q(\circ)$ and $L_a,R_b$ are translations of $Q(\cdot)$.
\end{lemma}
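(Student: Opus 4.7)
The plan is to prove both directions by unpacking the standard definition of a principal isotope and reading off exactly what the condition ``$1$ is the identity of $Q(\circ)$'' imposes on the defining permutations.

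For the forward direction, I would start from the definition: $Q(\circ)$ being a principal isotope of $Q(\cdot)$ means $x\circ y = \alpha(x)\cdot\beta(y)$ for some permutations $\alpha,\beta$ of $Q$ (with $\gamma = \mathrm{id}$). Substituting $x=1$ and then $y=1$, and using that $1$ is the two-sided identity of $Q(\circ)$, I obtain $\alpha(1)\cdot\beta(y)=y$ and $\alpha(x)\cdot\beta(1)=x$ for every $x,y\in Q$. Setting $a:=\alpha(1)$ and $b:=\beta(1)$, these two equations say exactly that $L_a\beta = \mathrm{id}$ and $R_b\alpha = \mathrm{id}$, so $\beta=L_a^{-1}$ and $\alpha = R_b^{-1}$. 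The compatibility $a\cdot b = 1$ then drops out as $a\cdot b = \alpha(1)\cdot\beta(1) = 1\circ 1 = 1$, yielding the desired formula.

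For the converse, given $a,b\in Q$ with $a\cdot b = 1$, I would define $x\circ y := R_b^{-1}(x)\cdot L_a^{-1}(y)$, which is automatically a principal isotope of $Q(\cdot)$ (take $\alpha=R_b^{-1}$, $\beta=L_a^{-1}$), and it remains only to verify that the resulting quasigroup is a loop with identity $1$. The key observation is that $R_b^{-1}(1)$ is the unique element $z\in Q$ with $z\cdot b = 1$, which is $a$ by hypothesis, and dually $L_a^{-1}(1)=b$. Then $1\circ y = R_b^{-1}(1)\cdot L_a^{-1}(y) = a\cdot L_a^{-1}(y) = y$ and $x\circ 1 = R_b^{-1}(x)\cdot L_a^{-1}(1) = R_b^{-1}(x)\cdot b = x$, so $1$ is indeed the identity of $Q(\circ)$.

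There is no real obstacle here; the only step requiring a moment of thought is recognizing that the single equation $a\cdot b = 1$ simultaneously fixes $R_b^{-1}(1)=a$ and $L_a^{-1}(1)=b$, which is what makes the two one-sided identity computations work. Everything else is direct book-keeping with the definitions of $R_b$, $L_a$ and their inverses.
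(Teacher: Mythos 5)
Your proof is correct and follows essentially the same route as the paper: compute $a=\alpha(1)$, $b=\beta(1)$, use the two identity equations $x\circ 1=x$ and $1\circ y=y$ to identify $\alpha=R_b^{-1}$ and $\beta=L_a^{-1}$, and get $a\cdot b=1$ from $1\circ 1=1$. The only difference is that you spell out the converse (which the paper dismisses as obvious), correctly noting that $a\cdot b=1$ gives $R_b^{-1}(1)=a$ and $L_a^{-1}(1)=b$.
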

\begin{proof} Indeed, if $Q(\circ)$ is a loop with the identity $1$ and $x\circ
y=\alpha(x)\cdot\beta(y)$ for some permutations $\alpha$, $\beta$
of $Q$, then for $a=\alpha(1)$, $b=\beta(1)$ we have
\begin{eqnarray*}
&&1=1\circ 1=\alpha(1)\cdot\beta(1)=a\cdot b,\\[3pt]
&&x=x\circ 1=\alpha(x)\cdot\beta(1)=\alpha(x)\cdot b,\\[3pt]
&&y=1\circ y=\alpha(1)\cdot\beta(y)=a\cdot\beta(y).
\end{eqnarray*}
Thus
$$
\alpha(x)=R_b^{-1}(x), \ \ \ \ \beta(y)=L_a^{-1}(y).
$$
Hence $x\circ y= R_b^{-1}(x)\cdot L_a^{-1}(y)$.

The converse statement is obvious.
\end{proof}

\begin{corollary}
A quasigroup $Q(\cdot)$ is a principal isotope of a loop
$Q(\circ)$ with the identity $1$ if and only if
$$
x\cdot y=R_b(x)\circ L_a(y),
$$
for some translations $L_a$, $R_b$ of $Q(\cdot)$ such that $a\cdot
b=1$.\qed
\end{corollary}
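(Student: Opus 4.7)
The plan is to derive this corollary directly from Lemma \ref{L3.9}, exploiting the fact that principal isotopy is a symmetric relation between a quasigroup and a loop. First I would verify this symmetry: if $x\cdot y=\alpha(x)\circ\beta(y)$ for some permutations $\alpha,\beta$ of $Q$, then setting $u=\alpha(x)$, $v=\beta(y)$ rewrites the identity as $u\circ y=\alpha^{-1}(u)\cdot\beta^{-1}(v)$, which shows that $Q(\circ)$ is a principal isotope of $Q(\cdot)$ precisely when $Q(\cdot)$ is a principal isotope of $Q(\circ)$.

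For the forward direction, assuming $Q(\cdot)$ is a principal isotope of $Q(\circ)$, the symmetry just noted lets me apply Lemma \ref{L3.9} to obtain $a,b\in Q$ with $a\cdot b=1$ (the identity of $Q(\circ)$) and $x\circ y=R_b^{-1}(x)\cdot L_a^{-1}(y)$. Then I would substitute $x\mapsto R_b(u)$ and $y\mapsto L_a(v)$; this substitution is legitimate because $R_b$ and $L_a$ are permutations of $Q$ (a consequence of $Q(\cdot)$ being a quasigroup). The substitution turns the identity into $R_b(u)\circ L_a(v)=u\cdot v$, which is exactly the formula claimed.

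The reverse direction is immediate: given that $x\cdot y=R_b(x)\circ L_a(y)$ with $a\cdot b=1$, I take $\alpha=R_b$, $\beta=L_a$, and $\gamma=\mathrm{id}_Q$, so that the defining relation of a principal isotope, $\gamma(x\cdot y)=\alpha(x)\circ\beta(y)$, holds.

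I do not expect any real obstacle here; the corollary is essentially a cosmetic inversion of Lemma \ref{L3.9}. The only points worth flagging in the write-up are that the element $1$ in the condition $a\cdot b=1$ denotes the identity of $Q(\circ)$ (consistent with the lemma) and that $R_b,L_a$ are bijective, which is automatic from the quasigroup axioms for $Q(\cdot)$.
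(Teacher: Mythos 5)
Your proposal is correct and follows exactly the route the paper intends: the corollary is stated with no proof because it is just Lemma \ref{L3.9} read in the opposite direction, and your symmetry-of-principal-isotopy observation plus the substitution $x\mapsto R_b(u)$, $y\mapsto L_a(v)$ is precisely that inversion. (Only a cosmetic slip: in your symmetry check the displayed identity should read $u\circ v=\alpha^{-1}(u)\cdot\beta^{-1}(v)$, not $u\circ y=\cdots$.)
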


\begin{proposition}
In any principal isotope $Q(\cdot)$ of a $D$-loop $Q(\circ)$ with the
identity $1$ we have
$$
\varphi_1\varphi_i^{-1}\varphi_1=\varphi_{i^{-1}},
$$
where $i^{-1}$ is calculated in $Q(\circ)$.
\end{proposition}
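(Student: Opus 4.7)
The plan is to deduce the proposition as a specialization of Theorem~\ref{T3.7} to the principal case, where the third component $\gamma$ of the isotopy reduces to the identity permutation.

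First I would use the corollary to Lemma~\ref{L3.9}: writing $Q(\cdot)$ as a principal isotope of $Q(\circ)$ amounts to an equation of the form $x\cdot y=\alpha(x)\circ\beta(y)$ for suitable permutations $\alpha,\beta$ of $Q$. This is precisely the isotopy relation \eqref{e15} of Theorem~\ref{T3.7} with the crucial feature that $\gamma=\mathrm{id}$. Theorem~\ref{T3.7} then applies and produces an element $p\in Q$ and a permutation $\sigma$ of $Q$ with
$$\varphi_p\varphi_i^{-1}\varphi_p=\varphi_{\sigma(i)}$$
for every $i\in Q$.

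Next I would trace the proof of Theorem~\ref{T3.7} to identify $p$ and $\sigma$ explicitly: there one has $p=\gamma^{-1}(1)$ and $\sigma=\gamma^{-1}\psi_1\gamma$, where $\psi_1$ is the track at the identity of the $D$-loop $Q(\circ)$. With $\gamma=\mathrm{id}$ both expressions collapse: $p=1$ (the identity of $Q(\circ)$, regarded as an element of the common underlying set $Q$) and $\sigma=\psi_1$. Since $Q(\circ)$ is a $D$-loop, Corollary~\ref{C1.4} gives $\psi_1(i)=i_R^{-1}=i^{-1}$, where the loop-inverse is computed in $Q(\circ)$. Substituting into the displayed identity yields
$$\varphi_1\varphi_i^{-1}\varphi_1=\varphi_{\psi_1(i)}=\varphi_{i^{-1}},$$
which is the claim.

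There is no serious obstacle; the only items warranting care are the bookkeeping identification of $p$ and $\sigma$ from inside the proof of Theorem~\ref{T3.7}, and the mild fact that the notion of principal isotope is symmetric in the two operations, so that the corollary to Lemma~\ref{L3.9} is available with $Q(\cdot)$ on the left and $Q(\circ)$ on the right as required here.
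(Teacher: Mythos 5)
Your proposal is correct and follows essentially the same route as the paper, whose proof is just the remark that the claim follows from \eqref{e15} and \eqref{e16}: you rightly observe that the bare statement of Theorem~\ref{T3.7} (which only asserts existence of $p$ and $\sigma$) is not enough, and that one must read off $p=\gamma^{-1}(1)=1$ and $\sigma=\gamma^{-1}\psi_1\gamma=\psi_1$ from its proof once $\gamma=\mathrm{id}$ in the principal case. The identification $\psi_1(i)=i^{-1}$ via Corollary~\ref{C1.4} then gives exactly the displayed identity, so the argument is complete.
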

\begin{proof}
It is a consequence of \eqref{e15} and \eqref{e16}.
\end{proof}
\begin{corollary}
A principal isotope $Q(\cdot)$ of a $D$-loop $Q(\circ)$ is a $D$-loop if and only if $Q(\cdot)$ and $Q(\circ)$
have the same inverse elements.\hfill$\Box{}$
\end{corollary}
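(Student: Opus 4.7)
The plan is to combine the immediately preceding proposition with Corollary~\ref{C2.3}(a) applied to the principal isotope $Q(\cdot)$ itself. Writing $\varphi_i$ for the tracks of $Q(\cdot)$, the preceding proposition already supplies
\[
\varphi_1\varphi_i^{-1}\varphi_1=\varphi_{i^{-1}},
\]
where $i^{-1}$ is the inverse of $i$ computed inside the $D$-loop $Q(\circ)$. On the other hand, Corollary~\ref{C2.3}(a), applied to $Q(\cdot)$, characterises $Q(\cdot)$ as a $D$-loop by exactly the same equation, but with $i^{-1}$ now meaning the inverse of $i$ taken in $Q(\cdot)$.

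For the ``only if'' direction I would assume $Q(\cdot)$ is a $D$-loop; then both identities hold simultaneously, and their right-hand sides must agree for every $i$. Writing $a$ and $b$ for the inverses of $i$ in $Q(\cdot)$ and in $Q(\circ)$ respectively, this says $\varphi_a=\varphi_b$. Since in any loop distinct elements give rise to distinct tracks (indeed $\varphi_c(1)=c$ by \eqref{e6} with $1$ the identity), this forces $a=b$, which is exactly equality of the two notions of inverse. The ``if'' direction is a direct substitution: if the inverses in $Q(\cdot)$ and $Q(\circ)$ already coincide, then the right-hand side supplied by the preceding proposition is precisely the one Corollary~\ref{C2.3}(a) demands, so $Q(\cdot)$ is a $D$-loop.

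There is no substantive obstacle; what requires attention is the bookkeeping around the element $1$. The preceding proposition is stated with $1$ denoting the identity of $Q(\circ)$, whereas Corollary~\ref{C2.3} concerns the identity of the loop to which it is applied. One must observe that in the present setting $1$ serves as the identity of $Q(\cdot)$ as well, so that the two characterisations really refer to the same permutation $\varphi_1\varphi_i^{-1}\varphi_1$. Once this alignment is in place, the corollary is immediate from comparing the two displayed identities and invoking injectivity of the map $a\mapsto\varphi_a$.
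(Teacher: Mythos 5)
Your overall strategy is exactly the one the paper intends: the corollary carries no proof precisely because it is meant to fall out of comparing the identity $\varphi_1\varphi_i^{-1}\varphi_1=\varphi_{i^{-1}}$ supplied by the preceding proposition (with $i^{-1}$ computed in $Q(\circ)$) against the criterion of Corollary~\ref{C2.3}$(a)$ applied to $Q(\cdot)$ (with $i^{-1}$ computed in $Q(\cdot)$), and then invoking injectivity of the map $a\mapsto\varphi_a$, which holds simply because $x\cdot\varphi_a(x)=a$ recovers $a$ from $\varphi_a$. Both directions of your comparison are carried out correctly on that basis.

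The gap is the sentence in which you ``observe'' that $1$ serves as the identity of $Q(\cdot)$ as well. That is not an observation but an additional hypothesis, and it can fail: a principal isotope of a $D$-loop that happens to be a loop need not have $1$ as its identity. For instance, let $Q(\circ)=\mathbb{Z}_3$ with identity $0$ and set $x\cdot y=x\circ 1\circ y$; this is a principal isotope which is a group (hence a $D$-loop) with identity $2\neq 0$, and the inverse of $0$ in $Q(\cdot)$ is $1$ while in $Q(\circ)$ it is $0$, so the ``only if'' direction of the statement itself breaks in this generality. Concretely, Corollary~\ref{C2.3}$(a)$ applied to $Q(\cdot)$ involves conjugation by $\varphi_e$, where $e$ is the identity of $Q(\cdot)$, and your argument silently replaces $\varphi_e$ by $\varphi_1$. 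The repair is to make explicit the assumption that $Q(\cdot)$ is a loop with the same identity $1$ as $Q(\circ)$ --- evidently what the authors intend, as the next corollary (phrased via $\varphi_1=\psi_1$) confirms; with that hypothesis stated, your proof is complete and coincides with the paper's implicit one.
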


\begin{corollary}
A principal isotope $Q(\cdot)$ of a $D$-loop $Q(\circ)$ is a $D$-loop if and only if
$Q(\cdot)$ and $Q(\circ)$ have the same tracks induced by the identity of $Q(\circ)$, i.e., if and only if $\varphi_1$ and $\psi_1$, where $1$ he identity of $Q(\circ)$. \hfill$\Box{}$
\end{corollary}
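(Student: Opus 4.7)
My plan is to derive this corollary directly from the preceding one, which characterizes those principal isotopes of $Q(\circ)$ that are $D$-loops as exactly the ones sharing the same inverse elements with $Q(\circ)$. The task then reduces to translating ``same inverse elements'' into the single equation $\varphi_1=\psi_1$.

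The key observation I will use is that in any loop the track induced by the identity is precisely the right-inverse map: from $x\cdot\varphi_1(x)=1$ one reads off $\varphi_1(x)=x_{_R}^{-1}$ in $Q(\cdot)$, and analogously $\psi_1(x)=x_{_R}^{-1}$ in $Q(\circ)$. Since $Q(\circ)$ is already a $D$-loop, Corollary \ref{C1.4} tells us that $\psi_1(x)=x^{-1}$ is the two-sided inverse in $Q(\circ)$; in particular $\psi_1$ is an involution.

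For the forward direction I would assume $Q(\cdot)$ is a $D$-loop. The preceding corollary then gives that the two-sided inverses in $Q(\cdot)$ and $Q(\circ)$ agree, so both $\varphi_1$ and $\psi_1$ coincide with the common map $x\mapsto x^{-1}$, yielding $\varphi_1=\psi_1$. For the converse I would start from $\varphi_1=\psi_1$: since $\psi_1$ is an involution, so is $\varphi_1$, hence $\lambda_1=\varphi_1^{-1}=\varphi_1$. This forces the left and right loop-inverses in $Q(\cdot)$ to coincide for every element and to equal the inverse in $Q(\circ)$, so $Q(\cdot)$ and $Q(\circ)$ share the same inverse elements, and the preceding corollary again yields that $Q(\cdot)$ is a $D$-loop.

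The only slightly delicate point -- not really a serious obstacle -- is the converse direction, where I must extract the existence of a two-sided loop-inverse in $Q(\cdot)$ from the lone hypothesis $\varphi_1=\psi_1$; this comes essentially for free from the involutivity of $\psi_1$, inherited from the $D$-loop property of $Q(\circ)$.
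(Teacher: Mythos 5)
Your argument is correct and takes exactly the route the paper intends: the corollary is stated there without proof as an immediate consequence of the preceding one, and your identification of $\varphi_1$ and $\psi_1$ with the respective inverse maps (using the involutivity of $\psi_1$, inherited from Corollary \ref{C1.4}, to recover two-sided loop-inverses in $Q(\cdot)$ in the converse direction) is the natural way to fill in the omitted details. The only tacit assumption you make --- one the paper itself also makes, since even the preceding corollary needs it --- is that the principal isotope $Q(\cdot)$ is a loop whose identity is the same element $1$ as in $Q(\circ)$; without that, $x\cdot\varphi_1(x)=1$ would not define the right loop-inverse in $Q(\cdot)$ and the translation of ``same inverse elements'' into $\varphi_1=\psi_1$ would break down.
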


\section*{\centerline{4. Proper D-loops}}\setcounter{section}{4}\setcounter{theorem}{0}

A $D$-loop is {\it proper} if it is not an IP-loop. The smallest $D$-loop has six elements. Below we present a full list of all nonisotopic proper $D$-loops of order $6$. They represent (respectively) the classes $8.1.1$, $9.1.1$, $10.1.1$ and $11.1.1$ mentioned in the book \cite{DK}.

\bigskip

\centerline{\small{$
\begin{array}{c|ccccccccccc}
\cdot&1&2&3&4&5&6\\ \hline
1&1&2&3&4&5&6\rule{0pt}{10pt}\\
2&2&1&6&5&3&4\\
3&3&6&1&2&4&5\\
4&4&5&2&1&6&3\\
5&5&3&4&6&1&2\\
6&6&4&5&3&2&1
\end{array}
\rule{15mm}{0mm}
\begin{array}{c|ccccccccccc}
\cdot&1&2&3&4&5&6\\ \hline
1&1&2&3&4&5&6\rule{0pt}{10pt}\\
2&2&3&1&6&4&5\\
3&3&1&2&5&6&4\\
4&4&6&5&1&2&3\\
5&5&4&6&2&3&1\\
6&6&5&4&3&1&2
\end{array}$}}

\bigskip

\centerline{\small{$
\begin{array}{c|ccccccccccc}
\cdot&1&2&3&4&5&6\\ \hline
1&1&2&3&4&5&6\rule{0pt}{10pt}\\
2&2&1&6&5&4&3\\
3&3&5&1&2&6&4\\
4&4&6&2&1&3&5\\
5&5&3&4&6&2&1\\
6&6&4&5&3&1&2
\end{array}
\rule{15mm}{0mm}
\begin{array}{c|ccccccccccc}
\cdot&1&2&3&4&5&6\\ \hline
1&1&2&3&4&5&6\rule{0pt}{10pt}\\
2&2&1&4&5&6&3\\
3&3&4&2&6&1&5\\
4&4&5&6&2&3&1\\
5&5&6&1&3&2&4\\
6&6&3&5&1&4&2
\end{array}$}}

\footnotesize{\rightline{Received \ September 23, 2012}

\noindent
I.I. Deriyenko\\
Department of Higher Mathematics and Informatics,
Kremenchuk State Polytechnic University,\\
20 Pervomayskaya str.,39600 Kremenchuk, Ukraine\\
E-mail: ivan.deriyenko@gmail.com\\

\medskip\noindent
W.A. Dudek\\
Institute of Mathematics and Computer Science, Wroclaw University of Technology,\\ 
Wyb. Wyspia\'nskiego 27,  
50-370 Wroclaw, Poland\\
E-mail: Wieslaw.Dudek@pwr.wroc.pl }
\end{document}